\newtheorem{thm}{Theorem}
\newtheorem{thmx}{Theorem}
\newtheorem{lem}[thm]{Lemma}
\newtheorem{prop}[thm]{Proposition}
\newtheorem{cor}[thm]{Corollary}
\newtheorem{defn}[thm]{Definition}
\newtheorem{OP}{Open problem}
\newenvironment{proof}{\noindent\emph{Proof.}}{\hfill$\square$\medskip}
\newcommand{\C}{\mathcal{C}}
\newcommand{\rn}{\mathbb{R}^{2m}}
\newcommand{\N}{\mathbb{N}}
\newcommand{\vol}{\mathrm{vol}}
\newcommand{\de}{\partial}
\newcommand{\ve}{\varepsilon}
\newcommand{\R}{\mathbb{R}}
\newcommand{\loc}{\mathrm{loc}}
\newcommand{\D}{\Delta}
\newcommand{\vp}{\varphi}
\title{Large blow-up sets for the prescribed $Q$-curvature equation in the Euclidean space}
\author{Ali Hyder\thanks{The authors are supported by the Swiss National Science Foundation, project nr. PP00P2-144669.} \\ {\small Universit\"at Basel}\\ {\footnotesize \texttt{ali.hyder@unibas.ch} }\and Stefano Iula${}^*$ \\ {\small Universit\"at Basel}\\ {\footnotesize \texttt{stefano.iula@unibas.ch} }\and Luca Martinazzi${}^*$ \\ {\small Universit\"at Basel}\\ {\footnotesize \texttt{luca.martinazzi@unibas.ch} }}
\begin{document}

\maketitle

\begin{abstract} 
Let $m\ge 2$ be an integer. For any open domain $\Omega\subset\rn$,  non-positive function $\varphi\in C^\infty(\Omega)$ such that $\Delta^m \varphi\equiv 0$, and bounded sequence $(V_k)\subset L^\infty(\Omega)$ we prove the existence of a sequence of functions $(u_k)\subset C^{2m-1}(\Omega)$ solving the Liouville equation of order $2m$
$$(-\Delta)^m u_k = V_ke^{2mu_k}\quad \text{in }\Omega, \quad \limsup_{k\to\infty} \int_\Omega e^{2mu_k}dx<\infty,$$
and blowing up exactly on the set $S_{\varphi}:=\{x\in \Omega:\varphi(x)=0\}$, i.e.
$$\lim_{k\to\infty} u_k(x)=+\infty \text{ for }x\in S_{\varphi} \text{ and }\lim_{k\to\infty} u_k(x)=-\infty \text{ for }x\in \Omega\setminus S_{\varphi},$$
thus showing that a result of Adimurthi, Robert and Struwe is sharp.
We extend this result to the boundary of $\Omega$ and to the case $\Omega=\rn$. Several related problems remain open.
\end{abstract}

\section{Introduction and main results}

In several nonlinear elliptic problems of second order and ``critical type'', sequences of solutions are not always compact, as they can blow up at finitely many points, see e.g  \cite{AS}, \cite{BC}, \cite{BM}, \cite{D}, \cite{SU}, \cite{str84}, \cite{str85}. For instance, as shown by H. Br\'ezis and F. Merle in \cite{BM}:

\begin{thmx}[\cite{BM}]\label{thmBM}
Given a sequence $(u_k)_{k\in\N}$ of solutions to the Liouville equation
\begin{equation}\label{BMeq}
-\Delta u_k=V_ke^{2u_k}\quad \text{in }\Omega\subset\R^{2},
\end{equation}
with $\|V_k\|_{L^\infty}\le C$ and $\|e^{2u_k}\|_{L^1}\le C$ for some $C$ independent of $k$, there exists a finite (possibly empty) set $S_1=\left\{ x^{(1)},\dots, x^{(I)}\right\}\subset\Omega$ such that, up to extracting a subsequence one of the following alternatives holds:
\begin{itemize}
\item[$(i)$] $(u_k)$ is bounded in $C^{1,\alpha}_{\loc}(\Omega\setminus S_1)$.
\item[$(ii)$]  $u_k\to-\infty$ locally uniformly in $\Omega\setminus S_1$.
\end{itemize}
\end{thmx}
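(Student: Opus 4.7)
\medskip

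\noindent\emph{Proof plan.} The natural strategy, following the original argument of Br\'ezis--Merle, is to isolate the concentration points via a measure-theoretic argument and then use a sharp Moser-Trudinger-type estimate to control $u_k$ away from them.

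First, since $\|V_k e^{2u_k}\|_{L^1(\Omega)}\le C\|V_k\|_{L^\infty}$, the measures $\mu_k:=V_k e^{2u_k}\,dx$ form a bounded sequence in the space of Radon measures on $\Omega$, so up to a subsequence they converge weakly-$*$ to some nonnegative Radon measure $\mu$. Define
\[
S_1:=\bigl\{x\in\Omega: \mu(\{x\})\ge 4\pi\bigr\}.
\]
Since $\mu(\Omega)<\infty$, the set $S_1$ is finite, say $S_1=\{x^{(1)},\dots,x^{(I)}\}$. This will be the candidate blow-up set.

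Second, I would establish local boundedness of $u_k$ on $\Omega\setminus S_1$ using the Br\'ezis--Merle inequality: if $v$ solves $-\Delta v=f$ in a bounded domain $D\subset\R^2$ with zero Dirichlet data, then
\[
\int_D \exp\!\Bigl(\tfrac{(4\pi-\delta)|v|}{\|f\|_{L^1(D)}}\Bigr)dx\le C_\delta (\mathrm{diam}\,D)^2.
\]
For $x_0\notin S_1$, pick $r>0$ small so that $B_{2r}(x_0)\Subset\Omega$ and $\mu(\overline{B_{2r}(x_0)})<4\pi$; then for large $k$, $\int_{B_{2r}(x_0)}V_k e^{2u_k}dx<4\pi-2\delta$. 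Split $u_k=w_k+h_k$ on $B_{2r}(x_0)$, where $-\Delta w_k=V_k e^{2u_k}$ with $w_k=0$ on $\de B_{2r}(x_0)$ and $h_k$ is harmonic. The Br\'ezis--Merle inequality then gives $\|e^{2p|w_k|}\|_{L^1(B_{2r}(x_0))}\le C$ for some $p>1$. Consequently $V_k e^{2u_k}=V_k e^{2w_k}e^{2h_k}$ is bounded in $L^p$ on smaller balls provided $h_k$ is bounded above there, which follows from the mean value property together with the $L^1$ bound on $u_k^+$ coming from Jensen. Standard elliptic theory then upgrades this to $C^{1,\alpha}_{\loc}$ bounds on $B_r(x_0)$, unless $h_k\to-\infty$.

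Third, the global dichotomy $(i)$ vs.\ $(ii)$ has to be extracted from this local picture. On the open set $\Omega\setminus S_1$, the set where $(u_k)$ remains locally bounded (after subsequence) and the set where $u_k\to-\infty$ locally uniformly are both open; a connectedness argument---using that $\Omega\setminus S_1$ is connected because $S_1$ is a finite set of points in the planar domain $\Omega$---shows that one of the two alternatives holds globally. Finally, one extracts a diagonal subsequence so that the chosen alternative is realized on an exhaustion of $\Omega\setminus S_1$ by compact sets.

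The main obstacle I anticipate is the delicate $L^p$ passage in step two: one must combine the Br\'ezis--Merle exponential estimate for $w_k$ with an independent $L^1$ bound on $u_k^+$ (hence on $h_k^+$) to deduce that $V_k e^{2u_k}$ lies in $L^p_{\loc}$ with $p>1$, without which no classical elliptic regularity applies. Keeping track of the harmonic part $h_k$ and distinguishing ``bounded above'' from ``bounded'' is the technical heart of the argument, and is precisely what underlies the dichotomy in the conclusion.
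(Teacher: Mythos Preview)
The paper does not supply its own proof of Theorem~A; it is quoted verbatim from Br\'ezis--Merle \cite{BM} as background for Theorem~B, so there is no ``paper's proof'' to compare against. Your outline is precisely the classical Br\'ezis--Merle argument, and the overall architecture (measure concentration, the sharp exponential inequality, harmonic/Newtonian splitting, Harnack/connectedness dichotomy) is correct.

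Two concrete issues in the details. First, the equation here carries $e^{2u_k}$, not $e^{u_k}$, so the concentration threshold in your definition of $S_1$ should be $2\pi$, not $4\pi$. With your choice $\mu(\{x\})\ge 4\pi$, on a ball where $\int V_k e^{2u_k}<4\pi-2\delta$ the Br\'ezis--Merle inequality only yields $e^{(1+\varepsilon)|w_k|}\in L^1$, which is \emph{not} enough to place $e^{2p w_k}$ in $L^1$ for any $p>1$; you need the mass below $2\pi$ to push the exponent past $2$. This is consistent with Theorem~B, where for $m=1$ one has $\Lambda_1/2=\tfrac12\cdot 1!\cdot|S^2|=2\pi$. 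Second, the statement does not assume $V_k\ge 0$, so $\mu_k=V_k e^{2u_k}\,dx$ need not converge to a nonnegative measure; you should define the concentration set via $|V_k|e^{2u_k}$ (again as in Theorem~B) and use $\|f\|_{L^1}$ rather than $\int f$ in the splitting step. With these corrections your plan goes through.
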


A similar behaviour is also found on manifolds, or in higher order and higher dimensional problems, see e.g. \cite{Mar3}, \cite{str07}, or even in $1$-dimensional situations involving the operator $(-\Delta)^\frac12$, see \cite{DLM}, \cite{DLMR}.
Now consider the problem
\begin{equation}\label{eq1}
(-\Delta)^m u_k=V_ke^{2mu_k}\quad  \text{in }\Omega\subset \R^{2m}
\end{equation}
\begin{equation}\label{vol}
\limsup_{k\to\infty} \int_{\Omega}e^{2mu_k}\, dx <\infty, \quad \limsup_{k\to\infty} \|V_k\|_{L^\infty(\Omega)}<\infty,
\end{equation}
We recall that \eqref{eq1} is a special case of the prescribed $Q$-curvature equation on a Riemannian manifold $(M,g)$ of dimension $2m$
\begin{equation}\label{eqpan}
P_g u+Q_g=Q_{g_u}e^{2mu}\quad \text{in }M,
\end{equation}
where $P_g=(-\Delta_g)^m+l.o.t.$ and $Q_g$ are the GJMS-operator of order $2m$ and the $Q$-curvature of the metric $g$, respectively, and $Q_{g_u}$ is the $Q$-curvature of the conformal metric $g_u:= e^{2u}g$. In this sense every solution $u_k$ to \eqref{eq1} gives rise to a metric $e^{2u_k}|dx|^2$ on $\Omega$ with $Q$-curvature $V_k$, and volume $\int_\Omega e^{2mu_k}dx$.

Since blow-up at finitely many points appears in many problems with various critical nonlinearities and also of higher order, one might suspect that this is a general feature also holding for \eqref{eq1}. On the other hand Adimurthi, Robert and Struwe \cite{ARS} found an example of solutions to \eqref{eq1}-\eqref{vol} for $m=2$ that blow up on a hyperplane, and showed in general that the blow-up set of a sequence $(u_k)$ of solutions to \eqref{eq1}-\eqref{vol} can be of Hausdorff dimension $3$. This was generalized to the case of arbitrary $m$ in \cite{Mar1}. More precisely for a finite set $S_1\subset\Omega\subset\R^{2m}$ let us introduce
\begin{equation}\label{defK}
\mathcal{K}(\Omega, S_1):=\{\varphi\in C^\infty(\Omega \setminus S_1):\varphi\le 0,\,\varphi\not\equiv 0,\, \Delta^m \varphi\equiv 0\},
\end{equation}
and for a function $\varphi \in \mathcal{K}(\Omega,S_1)$ set
\begin{equation}\label{defS0}
S_\varphi:=\{x\in\Omega\setminus S_1: \varphi(x)=0\}.
\end{equation}

\begin{thmx}[\cite{ARS,Mar1}]\label{trmARSM}
Let $(u_k)$ be a sequence of solutions to \eqref{eq1}-\eqref{vol} for some $m\ge 1$. Then the set
$$S_1:=\left\{x\in \Omega: \lim_{r\downarrow 0}\limsup_{k\to\infty}\int_{B_r(x)}|V_k|e^{2mu_k}dy\ge \frac{\Lambda_1}{2} \right\},\quad \Lambda_1:=(2m-1)!\vol(S^{2m})$$
is finite (possibly empty) and up to a subsequence either
\begin{itemize}
\item[$(i)$] $(u_k)$ is bounded in $C^{2m-1,\alpha}_{\loc}(\Omega\setminus S_1)$, or
\item[$(ii)$] there exists a function $\varphi\in\mathcal{K}(\Omega, S_1)$ and a sequence $\beta_k\to\infty$ as $k\to+\infty$ such that
$$\frac{u_k}{\beta_k}\to \varphi \text{ locally uniformly in }\Omega\setminus S_1.$$
In particular $u_k\to -\infty$ locally uniformly in $\Omega\setminus (S_\varphi\cup S_1)$.
\end{itemize}
\end{thmx}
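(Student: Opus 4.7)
The plan is to follow the Brezis--Merle strategy of Theorem \ref{thmBM} adapted to the polyharmonic operator $(-\D)^m$. First, the finiteness of $S_1$ is a standard covering argument: since $\int_\Omega |V_k| e^{2mu_k}\,dx$ is uniformly bounded and every point of $S_1$ accumulates at least $\Lambda_1/2$ mass in every neighborhood, we obtain $\#S_1 \le 2C/\Lambda_1 < \infty$.

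For the dichotomy on $\Omega\setminus S_1$, fix $x_0\in\Omega\setminus S_1$ and, by definition of $S_1$, choose $r>0$ with $\limsup_k \int_{B_r(x_0)}|V_k|e^{2mu_k}\,dx < \Lambda_1/2$ along a subsequence. On $B:=B_r(x_0)$ I decompose $u_k = w_k + h_k$, where $w_k$ is the polyharmonic potential solving $(-\D)^m w_k = V_k e^{2mu_k}$ with Navier-type boundary conditions (via the Green function of $(-\D)^m$ on $B$), so that $\D^m h_k\equiv 0$. The crucial input is the Adams/Brezis--Merle-type inequality for this potential, as used in \cite{ARS,Mar1}: since the source has $L^1$-norm strictly below $\Lambda_1/2$, one gets $\int_B e^{q|w_k|}\,dx\le C$ for some $q>2m$, hence $e^{\pm 2m w_k}\in L^{1+\ve}_\loc$ uniformly. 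Combining with $\int_B e^{2mu_k}\,dx\le C$ via H\"older's inequality yields $\int_B e^{2m(1-\eta)h_k}\,dx\le C$ for some $\eta>0$. Because $h_k$ is polyharmonic, a standard $L^1\to L^\infty$ interior estimate for solutions of $\D^m h=0$ upgrades this bound (via Jensen) into a uniform upper bound $\sup_{B'}h_k\le C(B')$ on every $B'\Subset B$.

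Setting $\beta_k := 1 + \sup_{B'}|h_k|$, two alternatives arise. If $\beta_k$ stays bounded along a subsequence, then $u_k$ is locally bounded in $L^\infty$, and a bootstrap in \eqref{eq1} via standard elliptic regularity gives compactness in $C^{2m-1,\alpha}_\loc$, yielding case $(i)$. Otherwise $\beta_k\to\infty$ and $\vp_k:=h_k/\beta_k$ is a bounded sequence of polyharmonic functions, converging up to subsequence in $C^\infty_\loc(B)$ to a polyharmonic $\vp$. The bound $\int_B e^{2mh_k}\,dx\le C$ forces $\vp\le 0$: otherwise $\vp>0$ on an open set, on which $h_k\sim\beta_k\vp\to+\infty$, contradicting integrability of $e^{2mh_k}$. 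Finally, the local analysis is patched to all of $\Omega\setminus S_1$ by a diagonal extraction along a countable cover, with a single $\beta_k$ chosen via a compact exhaustion. The main obstacle is to ensure in the non-compact alternative that $\vp\not\equiv 0$ globally (so that $\vp\in\mathcal{K}(\Omega,S_1)$) and that the scalings are consistent across overlapping patches, so that $u_k/\beta_k\to\vp$ locally uniformly; once this is settled, the divergence $u_k\to-\infty$ on $\Omega\setminus(S_\vp\cup S_1)$ follows because $w_k$ is locally bounded there by the Adams-type estimate, while $h_k/\beta_k\to\vp(x)<0$.
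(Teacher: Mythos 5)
This theorem is quoted in the paper as Theorem B and attributed to \cite{ARS,Mar1}; the paper gives no proof, so there is nothing in the source to compare against directly. Judged on its own, your sketch correctly reproduces the overall architecture of Adimurthi--Robert--Struwe and Martinazzi: the covering argument for $\#S_1$, the decomposition $u_k=w_k+h_k$ with the Navier potential $w_k$, the Br\'ezis--Merle/Adams estimate giving $e^{q|w_k|}\in L^1_{\loc}$ for some $q>2m$ when the concentrated mass is strictly below $\Lambda_1/2$, the transfer to an exponential bound on the polyharmonic part $h_k$ via H\"older, and the dichotomy between boundedness and normalized convergence to a nonpositive polyharmonic $\varphi$.

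Two steps you treat as ``standard'' are in fact the technical heart of the matter and are left open. First, the deduction $\sup_{B'}h_k\le C(B')$ from $\int_B e^{c h_k}\,dx\le C$: for $m\ge2$ polyharmonic functions do not satisfy the maximum principle, and the Jensen/mean-value argument that works for harmonic functions fails verbatim (Pizzetti's formula gives the mean of $h_k$ as a weighted sum of $h_k(x_0),\Delta h_k(x_0),\dots,\Delta^{m-1}h_k(x_0)$, not $h_k(x_0)$ alone). One needs a genuine lemma on polyharmonic functions bounded in an exponential $L^1$ sense, which is one of the key technical inputs in \cite{Mar1}. Second, you explicitly flag but do not resolve the patching issue: showing that a single sequence $\beta_k$ works on all compacts, that the limit $\varphi$ is nontrivial, and that alternative $(i)$ vs. $(ii)$ is coherent across $\Omega\setminus S_1$ (one cannot have compactness on one ball and blow-up on another along the same subsequence). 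This coherence requires a Harnack-type propagation statement for polyharmonic functions uniformly bounded from above, which is precisely what the original references supply and which is not automatic. A minor slip: you invoke $\int_B e^{2mh_k}\,dx\le C$ when deducing $\varphi\le0$, whereas what you derived is $\int_B e^{2m(1-\eta)h_k}\,dx\le C$; this does not affect the conclusion, but the exponent should be stated consistently.
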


Notice that Theorem \ref{trmARSM} contains Theorem \ref{thmBM} since  when $m=1$ we have $S_\varphi=\emptyset$ for every $\varphi \in \mathcal{K}(\Omega,S_1)$ by the maximum principle. In fact the more complex blow-up behaviour of \eqref{eq1} when $m>1$ can be seen as a consequence of the size of $\mathcal{K}(\Omega,S_1)$. A way of recovering a finite blow-up behaviour for \eqref{eq1}-\eqref{vol} was given by F. Robert \cite{Rob1} when $m=2$ and generalized by the third author \cite{Mar2} when $m\ge 3$, by additionally assuming 
$$\|\Delta u_k\|_{L^1(B_r(x))}\le C\quad \text{on some ball }B_r(x)\subset \Omega,$$
which is sufficient to control the ``polyharmonic part'' of $u_k$.


\medskip

The first result that we will prove shows that the condition given in \cite{ARS} and \cite{Mar1} on the set $S_\varphi$ above is sharp, at least when $S_1=\emptyset$. In fact we shall consider a slightly stronger result, by defining
\begin{equation}\label{defS0*}
S_\varphi^*:=S_\varphi\cup\{x\in \de\Omega: \lim_{\Omega\ni y\to x}\varphi(y)=0\},
\end{equation}
namely we add to $S_\varphi$ the points on $\de\Omega$ where $\varphi$ can be continuously extended to $0$. Then we have

\begin{thm}\label{thm-1}
Let $\Omega\subset \R^{2m}$, $m\ge 1$, be an open (connected) domain and let $(V_k)\subset L^\infty(\Omega)$ be bounded. Then for every  $\varphi\in \mathcal{K}(\Omega, \emptyset)$
there exists a sequence $(u_k)$ of solutions to \eqref{eq1} with 
\begin{equation}\label{vol0}
\int_{\Omega} e^{2m u_k}\, dx\to 0,
\end{equation}
such that as $k\to \infty$
\begin{equation}\label{ukinfty}
u_k\to-\infty \text{ loc. unif. in $\Omega\setminus S_\varphi$},\quad u_k \to +\infty \text{ loc. unif. on $S_\varphi^*$,}
\end{equation}
where $S_\varphi$ and $S_\varphi^*$ are as in \eqref{defS0} and \eqref{defS0*}.
\end{thm}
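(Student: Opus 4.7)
The natural ansatz is $u_k=k\varphi+c_k+w_k$ with a scalar sequence $c_k\to\infty$ and a small correction $w_k$. Since $\Delta^m(k\varphi+c_k)=0$, the PDE $(-\Delta)^m u_k=V_ke^{2mu_k}$ reduces to
\begin{equation*}
(-\Delta)^m w_k = V_k\, e^{2mc_k}\, e^{2mk\varphi}\, e^{2mw_k}\quad\text{in }\Omega.
\end{equation*}
A non-trivial polyharmonic function is real-analytic, hence $S_\varphi$ has Lebesgue measure zero, and $\varphi\le 0$ forces $e^{2mk\varphi}\to 0$ almost everywhere in $\Omega$. Dominated convergence then gives $\|e^{2mk\varphi}\|_{L^q(\Omega')}\to 0$ on every bounded $\Omega'\subset\Omega$ and every $q<\infty$. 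Assuming first that $\Omega$ is itself bounded, my first step is to choose $c_k\to\infty$ slowly enough that $e^{2mc_k}\|e^{2mk\varphi}\|_{L^q(\Omega)}=o(1)$ while still $c_k/k\to 0$; this will simultaneously force $\int_\Omega e^{2mu_k}\,dx\to 0$ and $k\varphi+c_k\to-\infty$ locally uniformly in $\Omega\setminus S_\varphi$.

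To produce $w_k$ I would set up a Banach fixed-point argument. Letting $G(x)=c_m\log(1/|x|)$ be the standard fundamental solution of $(-\Delta)^m$ in $\rn$, I define
\begin{equation*}
T_k(w)(x):=\int_\Omega G(x-y)V_k(y)\, e^{2mc_k+2mk\varphi(y)+2mw(y)}\,dy.
\end{equation*}
H\"older's inequality with conjugate exponents $p,q\in(1,\infty)$, combined with $\sup_{x\in\Omega}\|G(x-\cdot)\|_{L^p(\Omega)}<\infty$ (valid for any $p<\infty$ on bounded $\Omega$) and the decay of $\|e^{2mk\varphi}\|_{L^q}$, yields $\|T_k(w)\|_\infty\to 0$ and $\mathrm{Lip}(T_k)\to 0$ on the closed unit ball of $L^\infty(\Omega)$. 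Banach then supplies a fixed point $w_k$ with $\|w_k\|_\infty\to 0$, standard elliptic regularity gives $w_k\in C^{2m-1,\alpha}_{\loc}(\Omega)$, and $u_k:=k\varphi+c_k+w_k\in C^{2m-1}(\Omega)$ solves \eqref{eq1}.

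With $\|w_k\|_\infty\to 0$, the limits in \eqref{ukinfty}--\eqref{vol0} follow readily. On any compact $K\subset\Omega\setminus S_\varphi$ one has $\varphi\le-\delta<0$ on $K$, so $u_k\le-k\delta+c_k+o(1)\to-\infty$; on $S_\varphi$ the bound $u_k\ge c_k-\|w_k\|_\infty\to+\infty$ is uniform; at a boundary point $x_0\in S_\varphi^*\cap\partial\Omega$ the continuous-extension hypothesis $\varphi(y)\to 0$ as $\Omega\ni y\to x_0$, combined with $\|w_k\|_\infty\to 0$, yields $u_k(y_k)\to+\infty$ for any sequence $y_k\to x_0$; finally $\int_\Omega e^{2mu_k}\,dx\le e^{2m}\,e^{2mc_k}\int_\Omega e^{2mk\varphi}\,dx\to 0$ by the choice of $c_k$.

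The main obstacle is the case of unbounded $\Omega$, where the logarithmic growth of $G$ destroys the $L^\infty$ control of $T_k$ at infinity and $\int_\Omega e^{2mk\varphi}\,dx$ need not vanish (or even be finite) when $\varphi$ does not decay at infinity. I would handle this by exhausting $\Omega$ by bounded subdomains $\Omega_j\nearrow\Omega$, running the bounded argument on each $\Omega_j$, and performing a diagonal extraction to produce a single sequence $u_k$ defined on $\Omega$ with the desired behaviour on each compact piece. The case $\Omega=\rn$, flagged in the abstract as a separate extension, appears to require modifying the ansatz by adding further polyharmonic terms that produce the requisite decay at infinity; this is the most delicate point of the construction.
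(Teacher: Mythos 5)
Your bounded-domain argument is sound and is essentially the paper's own: one writes $u_k$ as a polyharmonic part plus a small correction, solves for the correction by a fixed-point argument, and the two scalar sequences are calibrated so that the coefficient of $\varphi$ dominates the additive constant while the additive constant still goes to $+\infty$. The two cosmetic differences (you use the contraction mapping principle where the paper uses Schauder; you swap the roles of the two scalar sequences, taking $k$ as the coefficient of $\varphi$ and $c_k\to\infty$ slowly rather than $c_k\ge k^2$ as coefficient and $k$ as additive constant) are both harmless.

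The real problem is the unbounded case, which you dispatch in one sentence with an exhaustion-and-diagonal scheme, and this is a genuine gap, not a routine step. For each fixed $k$ the bounded argument on $\Omega_j$ produces a function $u_{k,j}$ that depends on $j$ through the fixed-point operator, the choice of Green's function (or of boundary conditions), and the scalar $c_{k,j}$; there is no a priori estimate that makes $u_{k,j}$ converge as $j\to\infty$, the candidate limit would only be defined after passing to a subsequence on each compact and you would lose all quantitative control on $\int_\Omega e^{2mu_k}\,dx$ (mass can escape to infinity, or the limit may fail to exist altogether). Moreover, when $\liminf_{|x|\to\infty}\varphi(x)>-\infty$ (e.g.\ $\Omega=\rn$, $\varphi=-x_1^2$), the factor $e^{2mk\varphi}$ alone does not make $e^{2mu_k}$ integrable, so no choice of $c_k$ can fix \eqref{vol0} within your ansatz. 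The paper handles this by changing the ansatz to $u_k=-\beta|x|^2+c_k\varphi-\alpha_k u_0+k+v_k$: the term $-\beta|x|^2$ produces the needed integrability at infinity, the term $-\alpha_k u_0$ (with $u_0\sim\log|x|$) is the crucial new ingredient and $\alpha_k$ is chosen so that the right-hand side has zero total integral, which is precisely what the McOwen isomorphism (Lemma \ref{isomorphism}) on weighted Sobolev spaces requires in order to invert $(-\Delta)^m$ on $\rn$ with a bounded, decaying solution; compactness then comes from the weighted embedding (Lemma \ref{compact}). None of this is recoverable from an exhaustion argument, so as written your proof only covers the smoothly bounded case.
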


The proof of Theorem \ref{thm-1} is based on a Schauder's fixed-point argument.
The case when $\Omega$ is smoothly bounded is very elementary, as one looks for solutions of the form
$$u_k= c_k\varphi +k+v_k,\quad c_k\to\infty,$$
where $v_k$ is a small correction term.

The general case is a priori more rigid. For instance in the case $m=1$, when $V_k\equiv 1$ there are few solutions to \eqref{eq1}-\eqref{vol} when $\Omega=\R^2$ (see \cite{CL}) and many more when $\Omega$ is bounded (see \cite{XC}).
To treat the general case we will borrow ideas from \cite{WY} (see also \cite{H-M}) and suitably prescribe the asymptotic behavior of $u_k$ at infinity. More precisely we will look for solutions of the form
$$u_k= c_k\varphi +k-\alpha_k \log(1+|x|^2)- \beta |x|^2+v_k,$$
for some $c_k\to\infty$, $\alpha_k\to 0$, $\beta>0$, and a function $v_k\to 0$ uniformly. If $\varphi(x)\to -\infty$ sufficiently fast as $|x|\to\infty$, or when $\Omega$ is bounded, one can choose $\beta=0$, but the case $\Omega=\R^{2m}$, $\varphi(x_1,\dots,x_{2m})=-x_1^2$ shows that $\beta$ in general must be positive when
$$\liminf_{ x\in \Omega, |x| \to \infty}\varphi(x)> -\infty,$$
otherwise the condition \eqref{vol} might fail to be satisfied.
 

The simplicity of the proof of Theorem \ref{thm-1} comes at the cost of not being able to prescribe the total $Q$-curvature of the metric $g_{u_k}:=e^{2u_k}|dx|^2$, which will necessarily go to zero, together with the volume of $g_{u_k}$. 
Resting on variational methods from \cite{A-H} going back to \cite{CC},
we can extend Theorem \ref{thm-1} to the case in which we prescribe both the blow-up set $S_{\varphi}$ and the total curvature of the metrics $g_{u_k}$. This time, though, we will have to restrict to non-negative functions $V_k$.

\begin{thm}\label{thm-2}
Let $0<\Lambda<\Lambda_1/2$, $\Omega\subset \rn$ open, $m\ge 2$, $\varphi\in \mathcal{K}(\Omega,\emptyset)$, and let $S_\varphi$ be as 
in \eqref{defS0}. Let further $V_k:\Omega\to \R$ be functions for which there exists $x_0\in S^*_\varphi$ such that  
\begin{equation}\label{condVk}
\liminf_{k\to+\infty}\int_{B_\ve(x_0)\cap \Omega} V_k \, dx>0,\quad\text{for every }\ve>0,\quad 0\leq V_k\leq b<\infty.
\end{equation}
Then there exists a sequence $(u_k)_{k\in\N}$ of solutions to  \eqref{eq1} with 
\begin{align}\label{eq-2}
\int_{\Omega}V_k e^{2mu_k}dx=\Lambda,
\end{align}
such that \eqref{ukinfty} holds.
 \end{thm}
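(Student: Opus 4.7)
The plan is to combine the ansatz underlying Theorem \ref{thm-1} with a variational construction that fixes the $Q$-curvature mass at $\Lambda$. Writing $u_k = c_k \varphi + w_k$ for a parameter $c_k \to \infty$ to be chosen (together, when $\Omega$ is unbounded, with a confinement correction of the type $-\beta|x|^2$ as in the discussion following Theorem \ref{thm-1}), and using $\Delta^m \varphi \equiv 0$, the problem reduces to finding $w_k$ satisfying
\begin{equation*}
(-\Delta)^m w_k = \tilde V_k e^{2m w_k} \text{ in } \Omega, \qquad \int_\Omega \tilde V_k e^{2m w_k}\, dx = \Lambda,
\end{equation*}
where $\tilde V_k := V_k e^{2m c_k \varphi}$ is non-negative and bounded, decays exponentially in $c_k$ on compact subsets of $\Omega \setminus S_\varphi$, but, because $\varphi(x_0)=0$ at some $x_0 \in S_\varphi^*$, still has uniformly positive mass in every neighbourhood of $x_0$ by \eqref{condVk}. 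Thus the mass of $e^{2mw_k}$ is forced to concentrate near $S_\varphi^*$.

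To produce $w_k$, I would appeal to the variational framework of \cite{A-H} (ultimately rooted in \cite{CC}), applied to an Adams--Moser--Trudinger type functional such as
\begin{equation*}
J_k(w) = \frac{m}{\Lambda_1}\int_\Omega |\Delta^{m/2} w|^2\, dx - \log \int_\Omega \tilde V_k e^{2m w}\, dx,
\end{equation*}
on a suitable Sobolev space (using Navier conditions on an exhausting sequence of smoothly bounded $\Omega_j \subset\subset \Omega$ when $\Omega$ is unbounded, followed by a diagonal limit in $j$). In the subcritical regime $\Lambda < \Lambda_1/2$, the sharp Adams inequality forces $J_k$ to be coercive and bounded below on the constraint $\int \tilde V_k e^{2mw}\, dx = 1$, yielding a minimizer; after absorbing the Lagrange multiplier into an additive constant, its Euler--Lagrange equation is exactly $(-\Delta)^m w_k = \tilde V_k e^{2m w_k}$ with the prescribed mass $\Lambda$. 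The hypothesis $V_k \ge 0$ enters crucially here, since without a sign on $\tilde V_k$ the functional loses its lower bound and the Adams machinery breaks down.

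Finally, to verify \eqref{ukinfty}, I would exploit two ingredients. First, locally on $\Omega \setminus S_\varphi$ the weight $\tilde V_k$ decays uniformly, so the Brezis--Merle-type analysis underlying Theorem \ref{trmARSM} provides locally uniform bounds on $w_k$ there, and combined with $c_k \varphi \to -\infty$ gives $u_k \to -\infty$ locally uniformly in $\Omega \setminus S_\varphi$. Second, at any $y \in S_\varphi^*$, assuming by contradiction that $u_k$ stays bounded near $y$ would, via $\varphi(y)=0$, translate into boundedness of $w_k$ near $y$; but then the uniformly positive local mass of $\tilde V_k$ near $y$ coming from \eqref{condVk} would force $e^{2mw_k}$ to redistribute its fixed total mass $\Lambda$ away from $y$, contradicting the minimality of $w_k$. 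The main obstacle I expect is executing the variational step on the unbounded domain $\Omega = \rn$: making it compatible with the confining correction $-\beta|x|^2$, controlling the diagonal limit in the exhaustion without losing mass at infinity, and upgrading the blow-up from the single point $x_0$ to the whole of $S_\varphi^*$ — which is what distinguishes the statement we must prove from a mere adaptation of the existence results in \cite{A-H}.
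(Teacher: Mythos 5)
Your high-level strategy --- push the mass onto a fixed sequence and solve a variational problem for the correction $w_k$ with prescribed mass --- is the same one the paper follows, but the execution diverges from the paper at three places, and at least two of them are genuine gaps.

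First, the obstacle you flag yourself (the diagonal limit over an exhaustion $\Omega_j \Subset \Omega$ with Navier conditions) is not resolved in the proposal, and it is a real one: there is no reason for the weights $\tilde V_k$ to vanish near $\partial\Omega_j$, the Navier conditions on $\partial\Omega_j$ have no natural limit, and on $\Omega=\rn$ the exhaustion may leak mass at infinity. The paper side-steps all of this by pulling the problem back to the compact manifold $S^{2m}$ via stereographic projection (Proposition \ref{sol-prop}): one minimizes a $Q$-curvature functional involving the Paneitz operator $P^{2m}$ on $H^m(S^{2m})$, with $f=\frac{2\Lambda}{\Lambda_1}(-\Delta)^m u_0$ (so that $\int f = \Lambda$) compensating the balance condition, and then projects back. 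This automatically builds the correct logarithmic behaviour $-\frac{2\Lambda}{\Lambda_1}\log|x|$ at infinity into the ansatz, which your decomposition $u_k = c_k\varphi + w_k$ does not produce by itself.

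Second, you attribute the restriction $\Lambda<\Lambda_1/2$ to coercivity of the Adams functional. That is not where it enters: Proposition \ref{sol-prop} works for all $\Lambda\in(0,\Lambda_1)$. The factor $1/2$ is used in Lemma \ref{lemmause}, where Jensen's inequality applied to the integral representation $u_k(x)=I_k(x)-\beta|x|^2+k\varphi(x)+c_k$ gives a pointwise bound involving $\int \big(\frac{1+|y|}{|x-y|}\big)^p V_k e^{2mu_k}\,dy$ with $p=2m\cdot\frac{2\Lambda}{\Lambda_1}$; the hypothesis $\Lambda<\Lambda_1/2$ is exactly $p<2m$, the threshold for local integrability of $|x-y|^{-p}$ in $\rn$, i.e. the higher-order Br\'{e}zis--Merle threshold $\gamma_{2m}=\Lambda_1/2$.

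Third, your verification of \eqref{ukinfty} via ``boundedness of $w_k$ forces mass redistribution, contradicting minimality'' is too vague to stand on its own. The paper's mechanism is an explicit growth analysis of the additive constant $c_k$: one shows $c_k\to\infty$ (else the total mass tends to $0$, contradicting $\Lambda>0$) and $c_k=o(k)$ (else, near $x_0\in S_\varphi^*$ where $\varphi>-\varepsilon$, one would have $u_k\geq -C+k\varepsilon\to\infty$ on a set where \eqref{condVk} gives uniformly positive $\int V_k$, forcing the mass to infinity). Combined with the upper/lower bounds on $I_k$ coming from the Jensen step, this yields $u_k\to+\infty$ on $S_\varphi^*$ and $u_k\to-\infty$ on $\Omega\setminus S_\varphi$. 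Without identifying this $c_k=o(k)$ rate --- which is the only place the hypothesis \eqref{condVk} and the point $x_0\in S_\varphi^*$ are used --- the proposal cannot close.
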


The integral assumption in \eqref{condVk} is crucial. In fact, for any $\vp\in \mathcal{K}(\Omega,\emptyset)$
there are functions $V_k$ satisfying $0\leq V_k\leq b<\infty$,
such that for every $\Lambda>0$ there exists no sequence $(u_k)$ of solution to \eqref{eq1} satisfying \eqref{ukinfty} and \eqref{eq-2} (see Proposition \ref{non}).

As we shall see, Theorems \ref{thm-1} and \ref{thm-2} give several examples of solutions blowing-up on the boundary, already in dimension $2$.

\begin{cor}\label{corollary} Let $\Omega\subset\R^{2m}$ with $m\geq 1$ be a bounded domain  with smooth boundary and let $\Gamma\subset \de\Omega$ be a proper closed subset.  Let $(V_k)$ be as in Theorem \ref{thm-1}. Then we can find solutions $u_k: \Omega\to\R$ to \eqref{eq1} such that the conclusion of Theorem \ref{thm-1} holds with $S^*_\varphi=\Gamma$ for some $\varphi\in \mathcal{K}(\Omega,\emptyset)$.
If $m\ge 2$ and $(V_k)$ additionally satisfies \eqref{condVk} for some $x_0\in\Gamma$, then we can prescribe \eqref{eq-2} instead of \eqref{vol0}.

\end{cor}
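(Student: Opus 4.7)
The strategy is to reduce the corollary to a direct application of Theorems \ref{thm-1} and \ref{thm-2}: it suffices to exhibit a single $\varphi\in\mathcal{K}(\Omega,\emptyset)$ with $S_\varphi=\emptyset$ and whose continuous boundary extension vanishes exactly on $\Gamma$, so that $S^*_\varphi=\Gamma$. The observation that makes this elementary is that polyharmonicity is trivially satisfied by harmonic functions, so we need only construct a suitable harmonic $\varphi$.

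Concretely, I would define $g:\partial\Omega\to\R$ by $g(x):=-\mathrm{dist}(x,\Gamma)$, which is Lipschitz, non-positive, vanishes exactly on $\Gamma$, and is not identically zero since $\Gamma$ is a proper subset of $\partial\Omega$. Since $\partial\Omega$ is smooth, every boundary point is regular for the Laplacian, so the Dirichlet problem
\begin{equation*}
\Delta\varphi=0\text{ in }\Omega,\qquad \varphi=g\text{ on }\partial\Omega,
\end{equation*}
admits a unique solution $\varphi\in C^\infty(\Omega)\cap C(\overline{\Omega})$. By the strong maximum principle $\varphi<0$ in $\Omega$, hence $S_\varphi=\emptyset$, and the continuous extension gives $S^*_\varphi=\{x\in\partial\Omega:g(x)=0\}=\Gamma$. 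Since $\Delta\varphi=0$, also $\Delta^m\varphi=0$, so $\varphi\in\mathcal{K}(\Omega,\emptyset)$.

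With this $\varphi$ fixed, Theorem \ref{thm-1} applied to the bounded sequence $(V_k)$ directly yields a sequence of solutions $(u_k)$ satisfying \eqref{vol0} and \eqref{ukinfty}, which proves the first assertion. For the second assertion, by hypothesis \eqref{condVk} holds for some $x_0\in\Gamma=S^*_\varphi$, so Theorem \ref{thm-2} applies (note $m\ge 2$) and furnishes the solutions $(u_k)$ satisfying \eqref{eq-2} together with \eqref{ukinfty}. The only point requiring any genuine input is the continuous attainment of boundary data in the Dirichlet problem, which is handled by classical potential theory for smooth domains; accordingly, there is no substantial obstacle.
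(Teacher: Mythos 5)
Your proposal is correct and is essentially the same as the paper's proof: both construct a polyharmonic $\varphi\in\mathcal{K}(\Omega,\emptyset)$ as the solution of a boundary value problem with boundary data $g\le 0$ vanishing exactly on $\Gamma$, then apply the maximum principle to get $\varphi<0$ in $\Omega$ (so $S_\varphi=\emptyset$, $S^*_\varphi=\Gamma$), and finally invoke Theorems \ref{thm-1} and \ref{thm-2}. The only cosmetic differences are that you take the explicit Lipschitz datum $g=-\mathrm{dist}(\cdot,\Gamma)$ and solve the Laplace--Dirichlet problem, whereas the paper takes a smooth $g$ vanishing exactly on $\Gamma$ and solves the Navier problem for $(-\Delta)^m$; since the Navier conditions $(-\Delta)^j\varphi=0$, $j=1,\dots,m-1$, force $(-\Delta)^j\varphi\equiv 0$ in $\Omega$ for $j\ge 1$, the paper's $\varphi$ is in fact just the harmonic extension of $g$ as well, so the two constructions coincide up to the choice of $g$. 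Your explicit $g$ sidesteps the (implicit in the paper) step of producing a smooth function vanishing precisely on the closed set $\Gamma$, at the minor cost of only having $\varphi\in C^\infty(\Omega)\cap C(\overline\Omega)$ rather than $C^\infty(\overline\Omega)$ — which is all that $\mathcal{K}(\Omega,\emptyset)$ and the definition of $S^*_\varphi$ require.
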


\begin{OP}
Can one remove the assumption $\Lambda<\frac{\Lambda_1}{2}$ in Theorem \ref{thm-2}? In the radially symmetric case this appears to be the case, as the following result shows.
\end{OP}

\begin{thm}\label{thm-4}
 Let $\Omega=B_{R_2}\setminus B_{R_1}\subset\R^{2m}$ and  $\varphi\in \mathcal{K}(\Omega,\emptyset)$ be radially symmetric.  Let $\Lambda>0$ and let $(V_k)$ be radially symmetric satisfying \eqref{condVk}. Then there exists a sequence of radially symmetric solutions $(u_k)$ to \eqref{eq1}  such that  \eqref{ukinfty} and \eqref{eq-2} hold. For $\Omega = B_R$ the same conclusion holds if in addition we have $\Delta \varphi(0)>0$ and $V_k\to1$ in $L^\infty(B_\delta(0))$ for some $\delta>0$.
 
\end{thm}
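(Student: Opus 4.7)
The plan is to adapt the variational construction behind Theorem~\ref{thm-2} to the radial class, where the enhanced compactness of radial Sobolev spaces makes the Adams threshold $\Lambda<\Lambda_1/2$ unnecessary. As in Theorem~\ref{thm-2} I would write $u_k=c_k\varphi+w_k$ for a sequence $c_k\to+\infty$ to be chosen, and use $\Delta^m\varphi\equiv 0$ to reduce \eqref{eq1}--\eqref{eq-2} to
$$(-\Delta)^m w_k = W_k\, e^{2mw_k}\ \text{ in }\Omega,\qquad \int_\Omega W_k\, e^{2mw_k}\,dx=\Lambda,\qquad W_k:=V_k\, e^{2mc_k\varphi}.$$
Since $\varphi\le 0$ is strictly negative outside $S_\varphi^*$ and $c_k\to+\infty$, the weight $W_k$ concentrates on $S_\varphi^*$, while hypothesis~\eqref{condVk} keeps a definite positive mass near $x_0\in S_\varphi^*$.

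For the annulus $\Omega=B_{R_2}\setminus B_{R_1}$ I would obtain $w_k$ as a minimizer of the Adams-type functional
$$J_k(w)=\frac{1}{2}\int_\Omega |(-\Delta)^{m/2}w|^2\,dx-\frac{\Lambda}{2m}\log\int_\Omega W_k\, e^{2mw}\,dx$$
on the radial subspace of $H^m(\Omega)$ with Navier boundary conditions. The decisive point is that a radial $H^m$ function on an annulus bounded away from the origin is essentially a $1$-dimensional function on $[R_1,R_2]$ with the non-degenerate weight $r^{2m-1}$, so $H^m_{\rad}(\Omega)\hookrightarrow L^\infty(\Omega)$ compactly. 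The exponential term is therefore continuous on bounded subsets of $H^m_{\rad}$ and the Moser-Trudinger-Adams obstruction disappears: $J_k$ is coercive and weakly lower semi-continuous for every $\Lambda>0$, so a minimizer $w_k$ exists and, after the standard Lagrange-multiplier normalization, solves the reduced equation. Choosing $c_k\to+\infty$ slowly enough to keep $\|w_k\|_{L^\infty}$ uniformly bounded in $k$ (using \eqref{condVk} to control the Lagrange multiplier as in the proof of Theorem~\ref{thm-2}), the function $u_k:=c_k\varphi+w_k$ satisfies \eqref{eq1}, \eqref{eq-2}, and \eqref{ukinfty}.

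The ball case $\Omega=B_R$ is more delicate, since radial $H^m$ functions on a ball in $\R^{2m}$ can develop a logarithmic singularity at the origin, so $H^m_{\rad}\not\hookrightarrow L^\infty$ and a concentration of $e^{2mw_k}$ at $0$ must be excluded separately. This is exactly where the extra hypotheses come in: $\varphi\le 0$ together with $\Delta\varphi(0)>0$ force $\varphi(0)<0$ by the maximum principle, so $W_k(x)=V_k(x)e^{2mc_k\varphi(x)}$ decays exponentially in $c_k$ on a neighbourhood of $0$. Any putative concentration at $0$ would have to carry at least the quantum $\Lambda_1$, and the assumption $V_k\to 1$ in $L^\infty(B_\delta(0))$ allows one to compare the concentrating profile with the explicit radial Liouville bubbles in $\R^{2m}$; the exponential smallness of $W_k$ near $0$ is incompatible with supporting such a bubble, so no concentration can persist and the argument from the annulus case goes through.

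The main obstacle I expect is precisely this last step: ruling out concentration at the origin in the ball case, by matching a candidate concentrating profile against the classification of radial Liouville bubbles and using both $\Delta\varphi(0)>0$ (to force exponential decay of $W_k$ near $0$) and $V_k\to 1$ (to make the bubble comparison quantitative). For the annulus, by contrast, the whole construction is essentially one-dimensional and no threshold on $\Lambda$ appears.
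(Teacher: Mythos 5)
Your variational approach is genuinely different from the paper's. The paper treats the annulus by a Leray--Schauder fixed-point argument: it works in $C^0_{\rad}(\bar\Omega)$, defines $T_k(v)=\tilde v+k\varphi$ with $\tilde v$ obtained from the Navier Green's function, and obtains the a priori bound by decomposing the annulus into finitely many narrow cones so that Jensen's inequality applies on each (this replaces the role your compact embedding plays). For the ball the paper works in $C^2_{\rad}(\bar B_R)$ and crucially inserts the self-referential term $\left(k+\frac{|\Delta v(0)|}{2\Delta\varphi(0)}\right)\varphi(x)$ into the definition of $T_k$, citing \cite{H-vol} for the existence of the fixed point and for the uniform bound $u_k\le C$ near $0$; this is where $\Delta\varphi(0)>0$ and $V_k\to 1$ enter, and it is a much more hands-on mechanism for forbidding concentration at the origin than the bubble-matching you sketch.

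For the annulus your argument is essentially correct, and is arguably cleaner than the cone argument, but two points should be tightened. First, after the Lagrange-multiplier normalization the solution is $\bar w_k:=w_k+c_{w_k}$ with $c_{w_k}=\frac{1}{2m}\log\mu_k$, and you must set $u_k:=c_k\varphi+w_k+c_{w_k}$; since $c_{w_k}\to+\infty$ (because $\int W_k\to 0$) the normalized profile $\bar w_k$ is \emph{not} bounded in $L^\infty$, and if it were, $u_k$ could not tend to $+\infty$ on $S_\varphi^*$ where $\varphi=0$. What is true (and what you should say) is that the \emph{unnormalized} minimizer $w_k$ is automatically bounded: testing $(-\Delta)^m w_k=\mu_k W_k e^{2mw_k}$ with $w_k$ gives $\|(-\Delta)^{m/2}w_k\|_{L^2}^2=\Lambda\frac{\int W_k e^{2mw_k}w_k}{\int W_k e^{2mw_k}}\le\Lambda\sup w_k\le C\Lambda\|w_k\|_{H^m}$, hence $\|w_k\|_{H^m}\le C\Lambda$ uniformly in $k$ with no need to tune $c_k$. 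Second, you then need $c_{w_k}=o(c_k)$ to conclude $u_k\to-\infty$ off $S_\varphi$; this follows from \eqref{condVk} exactly as in Lemma~\ref{lemmause}, and it would be worth stating explicitly.

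For the ball your sketch has a real gap. Even with $\varphi(0)<0$ forcing $W_k\le b\,e^{-2mac_k}$ on $B_\rho(0)$, the functional $J_k$ is not globally coercive once $\Lambda\ge\Lambda_1$: the radial Adams estimate on $B_R$ gives only $\log\int_{B_\rho}e^{2mw}\le C+\frac{m^2}{\alpha_{\mathrm{crit}}}\|w\|_{H^m}^2$, and the $\log\varepsilon_k$ gain offsets this only for $\|w\|_{H^m}^2\lesssim c_k$, so you get boundedness below on a ball of radius $\sim\sqrt{c_k}$ in $H^m$, not coercivity. A direct minimization therefore does not obviously produce a solution, and a mountain-pass, constrained minimization, or a blow-up analysis matching against the Lin--Martinazzi classification would be needed and is not supplied. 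The paper sidesteps all of this with the fixed-point term involving $\Delta v(0)$, which together with $V_k\to 1$ near $0$ delivers the a priori bound $u_k\le C$ on $B_\varepsilon$ directly. As written, your ball argument is an outline of a plausible strategy rather than a proof.
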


\subsection*{Gluing open problems}

We have worked under the assumption $S_1=\emptyset$. What happens if we drop it?

\begin{OP}
Can one have both $S_1\neq \emptyset$ and $S_\varphi\neq \emptyset$ in Theorem B? Or when $m=1$ can one have $S_1\neq \emptyset$ and $S_\varphi^*\neq \emptyset$? 
\end{OP}

This can be considered as a gluing problem. For instance, can one glue a standard bubble of the form 
\begin{equation}\label{spherical}
u_{x_0,\lambda}(x):=\log \frac{2\lambda}{1+\lambda^2|x-x_0|^2},\quad \text{for some }\lambda>0,\, x_0\in \R^{2m}
\end{equation}
solving
\begin{equation}\label{liouentire}
(-\Delta)^m u_{x_0,\lambda}=(2m-1)!e^{2mu_{x_0,\lambda}},\quad (2m-1)!\int_{\R^{2m}}e^{2m u_{x_0,\lambda}}dx=\Lambda_1,
\end{equation}
to one of the solutions provided by Theorems \ref{thm-1} and \ref{thm-2}?

Moreover, as shown by Chang-Chen \cite{CC}, when $m\ge 2$ problem \eqref{liouentire} has several solutions which are not of the form \eqref{spherical}. Such solutions behave polynomially at infinity, as shown in \cite{Lin,Mar0} (see also \cite{hyd,JMMX} for similar results in odd dimension). Let us call $v$ such a solution and
$$v_{x_1,\mu}(x):=v(\mu (x-x_1))+\log\mu, \quad \text{for some }\mu>0,\, x_1\in \R^{2m}.$$

\begin{OP} Can one glue a spherical solution $u_{x_0,\lambda}$ to a non-spherical solution $v_{x_1,\mu}$ as above ($x_1\ne x_0$)? More precisely, can one find a sequence of solutions $(u_k)$ to \eqref{eq1}-\eqref{vol} with $u_k=u_{x_0,\lambda_k}+w_k$ suitably close to $x_0$ and $u_k=v_{x_1,\mu_k}+w_k$ suitably close to $x_1$, with an error term $w_k$ bounded and $\lambda_k,\mu_k\to\infty$?
\end{OP}

This problem can be seen in terms of gradient estimates or estimates for $\Delta u_k$. Indeed on any fixed ball $B$ one has
$$\|\Delta u_{\lambda,x_0}\|_{L^1(B)}=O(1),\quad \|\Delta v_{\lambda,x_1}\|_{L^1(B)}\to \infty,\quad \text{as }\lambda\to\infty$$
(see Theorems 1 and 2 in \cite{Mar0}). This is consistent with a result of F. Robert \cite{Rob1}, extended in \cite{Mar2}, stating that in a region $\Omega_0$ such that $\|\Delta u_k\|_{L^1(\Omega_0)}\le C$, $u_k$ has a bubbling behaviour leading to solutions of the form \eqref{spherical}.

It was open whether there exists a sequence $(u_k)$ of solutions to \eqref{eq1}-\eqref{vol} on some domain $\Omega$ in $\R^{2m}$ with 2 open regions $\Omega_0,\Omega_1\subset\Omega$ such that
$$\|\Delta u_k\|_{L^1(\Omega_0)}=O(1),\quad \|\Delta u_k\|_{L^1(\Omega_1)}\to \infty.$$
We will prove that this is actually possible.
\begin{thm}\label{thm-5}
On $\Omega=B_2\subset\R^{2m}$ for any $\Lambda\in (0,\Lambda_1)$ we can find a sequence $(u_k)$ of solutions to \eqref{eq1}-\eqref{vol} with $V_k\equiv 1$ such that
\begin{equation}\label{Qcurv2}
\int_{B_2}e^{2mu_k}dx=\Lambda,
\end{equation}
and
\begin{equation}\label{Deltauk}
\int_{B_1}|\D u_k|dx\leq C,\quad \int_{B_2}(\D u_k)^-dx\xrightarrow{k\to\infty}\infty.
\end{equation}
\end{thm}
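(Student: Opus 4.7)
The strategy is to combine a concentrating spherical bubble at the origin --- which supplies the prescribed mass $\Lambda$ and the boundedness of $\|\Delta u_k\|_{L^1(B_1)}$ --- with a polyharmonic non-positive background $c_k\varphi_k$ whose Laplacian is heavily weighted in the annulus $B_2\setminus B_1$ and nearly inert on $B_1$, so as to produce the divergence of $\int_{B_2}(\Delta u_k)^-$. Concretely, I would adopt the ansatz
$$u_k(x) = \tilde u_{\lambda_k}(x) + c_k\varphi_k(x) + w_k(x),$$
where $\tilde u_{\lambda}$ is a spherical solution of $(-\Delta)^m\tilde u_\lambda=e^{2m\tilde u_\lambda}$ in $\R^{2m}$ (normalized so that $V\equiv 1$), $\varphi_k\in\mathcal{K}(B_2,\emptyset)$ is $k$-dependent, $\lambda_k,c_k\to\infty$ at suitably tuned rates, and $w_k$ is a small correction. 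Plugging into \eqref{eq1} with $V_k\equiv 1$ and using $\Delta^m\varphi_k=0$, the correction $w_k$ is determined by
$$(-\Delta)^m w_k = e^{2m\tilde u_{\lambda_k}}\bigl(e^{2m(c_k\varphi_k+w_k)}-1\bigr),$$
which I would solve by a Schauder fixed-point argument analogous to those used in Theorems \ref{thm-1} and \ref{thm-2}. Since $\tilde u_{\lambda_k}$ concentrates on a ball of radius $\lambda_k^{-1}$ around the origin, the source on the right-hand side is localized where $c_k\varphi_k$ can be kept of order one, making the fixed-point tractable.

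The key construction is that of $\varphi_k$. Because $\Delta\varphi_k$ is itself polyharmonic of order $m-1$, hence real analytic, it cannot vanish on the open set $B_1$ unless it vanishes identically, so we cannot hope to freeze one fixed $\varphi$ and only let $c_k\to\infty$. Instead, I would take a sequence $\varphi_k\in\mathcal{K}(B_2,\emptyset)$ with
$$\|\Delta\varphi_k\|_{L^1(B_1)}\to 0 \quad\text{while}\quad \int_{B_2}(\Delta\varphi_k)^-\,dx\geq\varepsilon>0.$$
Such $\varphi_k$ can be constructed, for example, by prescribing $\Delta^{m-1}\varphi_k$ to be proportional to a spherical harmonic of sufficiently high degree (whose $L^1$-mass is exponentially concentrated in the annulus $B_2\setminus B_1$), then integrating to obtain $\varphi_k$ and adjusting boundary data to keep it non-positive with a non-trivial zero set on $\partial B_2$. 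Choosing $c_k$ so that $c_k\|\Delta\varphi_k\|_{L^1(B_1)}=O(1)$ but $c_k\to\infty$ yields both inequalities in \eqref{Deltauk}, using the uniform bound $\|\Delta\tilde u_{\lambda_k}\|_{L^1(B_1)}\le C$ (from the scaling $\Delta\tilde u_\lambda(x)=\lambda^2\Delta\tilde u_1(\lambda x)$ and an elementary change of variables).

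The volume normalization \eqref{Qcurv2} is achieved by tuning the bubble's scale and a constant shift: for any $\Lambda\in(0,\Lambda_1)$ the mass $\int_{B_2}e^{2m\tilde u_{\lambda_k}}\,dx$ can be made to equal $\Lambda$, and the correction $w_k$ (being small in $C^0$) does not destroy this normalization to leading order. The passage to the exact value $\Lambda$ is then effected by a continuity argument in the parameters $(\lambda_k,c_k)$.

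The main obstacle is the simultaneous control of several diverging scales. The fixed-point for $w_k$ must be set up so that $w_k$ is small even though $c_k\varphi_k$ is large in $B_2\setminus S_{\varphi_k}$; this requires that the exponentially small weight $e^{2m c_k\varphi_k}$ on $B_2\setminus\{0\}$ compensates for $c_k\to\infty$, a balance already used in Theorem \ref{thm-1}. More delicate is the interaction of the $k$-dependence of $\varphi_k$ with this fixed-point scheme: one must ensure that the asymmetry of $\Delta\varphi_k$ between $B_1$ and the annulus persists in $\Delta u_k$, which relies on the fact that near the origin (where $\tilde u_{\lambda_k}$ is large) $\varphi_k$ itself is small in $C^0$ by interior analyticity estimates propagating from $\|\Delta\varphi_k\|_{L^1(B_1)}\to 0$. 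Combining these ingredients gives \eqref{Qcurv2} and \eqref{Deltauk}.
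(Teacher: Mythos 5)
Your key idea is the same as the paper's: you correctly observe, by real-analyticity of $\Delta\varphi$, that a single fixed $\varphi$ cannot give \eqref{Deltauk}, and that one must instead use a $k$-dependent polyharmonic function whose Laplacian is a high-degree harmonic (morally $r^k\cos(k\theta)$) with $L^1$-mass in $B_1$ tending to zero while its mass in the annulus diverges. This is exactly the heart of the paper's construction.

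Your execution, however, diverges from the paper's in a way that introduces gaps. The paper does not use a concentrating bubble. It sets $\varphi_k=r^k\cos(k\theta)$ (extended to $\R^{2m}$), solves $-\Delta\Phi_k=\varphi_k$ in $B_2$ with Navier data, and applies Proposition~\ref{sol-prop} with $K_k=e^{2m(\Phi_k+\frac{2\Lambda}{\Lambda_1}u_0)}\mathbf{1}_{B_2}$ and $f=\frac{2\Lambda}{\Lambda_1}(-\Delta)^m u_0$; then $u_k=w_k+c_{w_k}+\Phi_k+\frac{2\Lambda}{\Lambda_1}u_0$ satisfies \eqref{eq1} with $V_k\equiv 1$ and \eqref{Qcurv2} directly, since $(-\Delta)^m\Phi_k=(-\Delta)^{m-1}\varphi_k=0$. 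Crucially, $\|\Delta u_k\|_{L^1(B_1)}\le C$ is \emph{not} supplied by a bubble: from the Green representation one has $\Delta u_k=-\varphi_k+e_k$ with $|e_k(x)|\le C\int_{B_2}|x-y|^{-2}e^{2mu_k(y)}\,dy$, and Fubini together with \eqref{Qcurv2} gives $\|e_k\|_{L^1(B_2)}\le C\Lambda$ for $m\ge 2$. Both parts of \eqref{Deltauk} then follow from elementary estimates on $r^k\cos(k\theta)$. You attribute the $B_1$-bound to the bubble's scaling; while the scaling fact $\|\Delta \tilde u_\lambda\|_{L^1(B_1)}=O(1)$ is correct, it is not where the $B_1$-bound actually comes from, and the bubble is altogether superfluous.

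Two further points where your scheme is incomplete. First, the multi-scale fixed-point for $w_k$ (with $\lambda_k\to\infty$, $c_k\to\infty$, $\varphi_k$ varying, plus a constant shift to fix the mass at $\Lambda<\Lambda_1$) is only postulated; the interaction between $e^{2m\tilde u_{\lambda_k}}$ being large near $0$ and the requirement that $e^{2m(c_k\varphi_k+w_k)}-1$ be small there would need to be checked carefully, whereas the paper side-steps all of this by invoking Proposition~\ref{sol-prop} once. Second, you impose $\varphi_k\in\mathcal K(B_2,\emptyset)$ (hence $\varphi_k\le 0$), but Theorem~\ref{thm-5} claims nothing about the blow-up set, and no sign condition is needed on the polyharmonic part; the paper's $\Phi_k$ carries no such constraint. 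Relatedly, you should prescribe $\Delta\varphi_k$ (not $\Delta^{m-1}\varphi_k$) to be the high-degree harmonic: then $\Delta^m\varphi_k=\Delta^{m-1}(\Delta\varphi_k)=0$ automatically and the $L^1$-concentration of the Laplacian is immediate, which is what \eqref{Deltauk} needs.
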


It remains open whether in the situation of Theorem \ref{thm-5} one can also have blow-up in $B_1$, in $B_2\setminus B_1$, or in both regions.

\medskip

In what follows we will denote by $C$ a generic positive constant that can change its value from line to line.

\paragraph{Acknowledgements} The question that led to Theorem \ref{thm-1}, then extended into the present work, was raised by Michael Struwe to the third author several years ago.

\section{Proof of Theorem \ref{thm-1}}

In order to clarify the simple idea behind the proof we start considering the easier case when $\Omega$ is bounded and has regular boundary. The proof in the general case is more complex.

\subsection{Case $\Omega$ smoothly bounded}

The proof will be based on an application of a fixed-point argument. Consider the Banach space $$X:=C^0(\bar{\Omega}),\quad \|v\|_X=\max_{x\in\bar{\Omega}}|v(x)|.$$
 For each $k\in\N$ choose $c_k\geq k^2$ such that $$\|e^{2mc_k \vp}\|_{L^2(\Omega)}\leq e^{-3mk}.$$
For $k\in \N$ consider the operator $T_{k}:X\to X$ defined by $T(v)=\bar{v}$ where $\bar{v}$ is the unique solution of
 $$\left\{\begin{array}{ll}
           (-\D)^m\bar{v}=V_ke^{2m(k+c_k\vp+v)}&\quad\text{in }\Omega \\
           \bar{v}=\D\bar{v}=\dots=\D^{m-1}\bar{v}=0&\quad\text{on }\partial\Omega.
          \end{array}
\right.$$
From elliptic estimates, the Sobolev embedding and Ascoli-Arzel\`a's theorem it follows that $T_{k}$ is compact.  
Moreover, for every $v\in X$ we have 
$$\|\bar{v}\|_X\leq C_1 \|\D^m\bar{v}\|_{L^2(\Omega)}\leq C_2Me^{2mk}\|e^{2mv}\|_X\|e^{2mc_k\vp}\|_{L^2(\Omega)},\quad\|V_k\|_{L^\infty}\leq M.$$
This shows that 
\begin{align}\label{est-Tkl}
 \|T_{k}(v)\|_X\leq C_3e^{2mk}e^{-3mk},\quad\text{for }\|v\|_X\leq 1,\quad C_3:=C_2M.
\end{align}
Therefore  $T_{k}(\bar{B}_1)\subset \bar{B}_\frac 12$ for $k$ large enough (here $B_r$ is a ball in $X$), and hence $T_{k}$ has a 
fixed point in $X$. We denote it by  $v_k$.  Notice that $\|v_k\|_X\leq C e^{-mk}\to 0$ as $k\to\infty$.
Moreover, by H\"older's inequality,
$$\int_{\Omega}e^{2mk}e^{2mc_k\vp}e^{2mv_k}dx\leq e^{2mk}\sqrt{|\Omega|}\|e^{2mc_k \vp}\|_{L^2(\Omega)}\xrightarrow{k\to\infty}0.$$
We set $$u_k:=v_k+k+c_k\vp.$$ Then $u_k$ satisfies 
$$(-\D)^mu_k=V_{k}e^{2mu_k}\quad\text{in }\Omega,\quad\int_{\Omega}e^{2mu_k}dx\xrightarrow{k\to\infty}0.$$
Moreover 
$$\inf_{x\in S_{\varphi}^*}u_k =o(1)+k \xrightarrow{k\to\infty}\infty.$$ 
Finally, for any compact subset $K\Subset \Omega\setminus S_{\varphi}$, using that $c_k\geq k^2$, we obtain
$$\max_{x\in K}u_k=o(1)+k+c_k \max_{x\in K}\vp \leq k -\ve k^2 \xrightarrow{k\to\infty}-\infty, $$ where 
 $\ve>0$ is such that  $\max_{x\in K}\vp<-\ve$. This completes the proof.


\subsection{General case}

We will use many ideas from \cite{H-M} and \cite{WY}. Let $\varphi\in \mathcal{K}(\Omega,\emptyset)$. 
Fix $u_0\in C^\infty(\rn)$, $u_0>0$, such that $u_0(x)=\log|x|$ for $|x|\ge 2$, and notice that integration by parts yields
\begin{equation}\label{unot}
 \int_{\rn}(-\Delta)^m u_0\, dx=-\gamma_{2m},
\end{equation}
where $\gamma_{2m}$ is defined by
\begin{equation}\label{gammam}
(-\Delta)^m\log\frac{1}{|x|}=\gamma_{2m}\delta_0 \text{ in }\rn, \text{ i.e. } \gamma_{2m}=\frac{\Lambda_1}{2}.
\end{equation}

We will work in weighted spaces.

\begin{defn}\label{definitionspaces} For $k\in \mathbb{N}$, $\delta\in\mathbb{R}$ and $p\ge 1$ we set $M_{k,\delta}^p(\rn)$ to be the completion of $C_c^{\infty}(\rn)$ in the norm 
$$\|f\|_{M_{k,\delta}^p} := \sum_{|\beta|\leq k}\|(1+|x|^2)^{\frac{(\delta+|\beta|)}{2}}D^{\beta}f\|_{L^p(\rn)}.$$
We also set $L^p_{\delta}(\rn):= M_{0,\delta}^p(\rn)$. Finally we set 
$$\Gamma^p_\delta(\rn)  := \left\{f\in L^p_{2m+\delta}(\rn):\int_{\rn}f dx=0\right\},$$
whenever $\delta p>-2m$, so that $L^p_{2m+\delta}(\rn)\subset L^1(\rn)$ and the above integral is well defined.
\end{defn}

\begin{lem}[Theorem 5 in \cite{McOwen}]\label{isomorphism} 
For $1<p<\infty$ and $\delta\in \left(-\frac{2m}{p},-\frac{2m}{p}+1\right)$, the operator $(-\Delta)^m$ is an isomorphism from $M_{2m,\delta}^p(\rn)$ to $\Gamma^p_\delta(\rn)$. 
\end{lem}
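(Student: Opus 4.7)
The plan is to follow the standard Lockhart--McOwen--Nirenberg--Walker framework for weighted Sobolev isomorphism theorems for elliptic operators, here specialized to the polyharmonic operator $(-\Delta)^m$ on $\rn$. Since the fundamental solution of $(-\Delta)^m$ on $\rn$ is a constant multiple of $\log|x|$ (see \eqref{gammam}), the operator is \emph{critical} in the scaling sense: the admissible interval of weights $\delta$ has length one, and the cokernel is captured by the single moment condition $\int f = 0$ built into $\Gamma^p_\delta$.

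First I would verify that $(-\Delta)^m$ maps $M_{2m,\delta}^p(\rn)$ continuously into $\Gamma^p_\delta(\rn)$. The bound $\|(-\Delta)^m v\|_{L^p_{2m+\delta}}\le C\|v\|_{M_{2m,\delta}^p}$ is immediate from the definition of the norms. For the zero-mean condition, the assumption $\delta>-2m/p$ is equivalent to $2m+\delta>2m/p'$, so $(1+|x|^2)^{-(2m+\delta)/2}\in L^{p'}(\rn)$ and H\"older's inequality gives the continuous embedding $L^p_{2m+\delta}(\rn)\hookrightarrow L^1(\rn)$; in particular $f\mapsto \int_{\rn} f\,dx$ is continuous on $L^p_{2m+\delta}$. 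Since on the dense subspace $C_c^\infty(\rn)$ we have $\int (-\Delta)^m v\,dx=0$ by parts, the same vanishing extends to the closure. Injectivity then follows from a Liouville-type argument: if $(-\Delta)^m u = 0$ with $u\in M_{2m,\delta}^p$, then by elliptic regularity and the classical Liouville theorem for entire polyharmonic functions, $u$ is a polynomial, and one checks directly that no nonzero polynomial of degree $d\ge 0$ lies in $M_{2m,\delta}^p$ for $\delta\in (-2m/p,-2m/p+1)$, since the $L^p$-integrability of the top derivative would force $\delta<-2m/p-d$, which is excluded.

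The heart of the argument --- and the step I expect to be the main obstacle --- is surjectivity. Given $f\in \Gamma^p_\delta(\rn)$, the natural inverse is convolution with the fundamental solution; using the moment cancellation $\int f = 0$ one renormalizes the kernel and defines
$$u(x) := -\frac{1}{\gamma_{2m}}\int_{\rn}\bigl(\log|x-y|-\log(1+|x|)\bigr) f(y)\,dy,$$
so that the logarithmic growth of $\log|x-y|$ at infinity is killed by the subtracted ($y$-independent) term. The required bound $\|u\|_{M_{2m,\delta}^p}\le C\|f\|_{L^p_{2m+\delta}}$ then splits into (a) interior control of derivatives $D^\alpha u$ with $1\le|\alpha|\le 2m$ by weighted Calder\'on--Zygmund estimates for the iterated Riesz-type transforms $\partial^\alpha(-\Delta)^{-m}$, for which the power weights in the stated range belong to the Muckenhoupt class $A_p$, and (b) far-field decay of $u$ itself, obtained by a Taylor expansion of $\log|x-y|$ in $y$ and further use of $\int f\,dy = 0$ to absorb the leading term. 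The interval $\delta\in(-2m/p,-2m/p+1)$ is precisely the maximal window between two consecutive indicial roots of $(-\Delta)^m$ at infinity, outside of which the Fredholm index jumps --- reflecting either additional kernel elements (harmonic polynomials) or further moment obstructions on the cokernel --- so the stated range is sharp for the isomorphism as formulated.
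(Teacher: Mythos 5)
The paper does not prove this lemma: it is a direct citation of Theorem~5 in McOwen~\cite{McOwen}, so there is no in-text argument to compare against. Your blind sketch nevertheless has the right Lockhart--McOwen skeleton: continuity of $(-\Delta)^m$ into $\Gamma^p_\delta$ plus the vanishing of $\int(-\Delta)^m v\,dx$ on the dense subspace $C^\infty_c$, injectivity via the polyharmonic Liouville theorem together with the observation that no polynomial lies in $M^p_{2m,\delta}$ when $\delta>-2m/p$, and surjectivity via convolution with the logarithmic fundamental solution. Two small remarks on the set-up: your renormalization subtracting $\log(1+|x|)$ is a $y$-independent term multiplied against $\int f\,dy=0$, so it literally subtracts nothing --- the moment condition already makes the raw potential $\int \log|x-y| f(y)\,dy$ converge and is what produces far-field decay; and for injectivity, the $|\beta|=0$ term of the norm already excludes every nonzero polynomial, so the detour through the top derivative is unnecessary (though not wrong).

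There is, however, a genuine gap in your surjectivity estimate. You invoke weighted Calder\'on--Zygmund theory for $\partial^\alpha(-\Delta)^{-m}$ ``for which the power weights in the stated range belong to the Muckenhoupt class $A_p$''. This is false precisely in the stated range. For the top-order case $|\alpha|=2m$, the needed estimate is $\|(1+|x|)^{\delta+2m}D^\alpha u\|_{L^p}\lesssim\|(1+|x|)^{\delta+2m}f\|_{L^p}$, i.e.\ a singular-integral bound with weight $w(x)\simeq |x|^{(\delta+2m)p}$. The power weight $|x|^\gamma$ on $\R^{2m}$ belongs to $A_p$ iff $-2m<\gamma<2m(p-1)$. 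The upper constraint $(\delta+2m)p<2m(p-1)$ is equivalent to $\delta<-2m/p$, which is exactly the opposite of your hypothesis $\delta>-2m/p$. So the weight sits \emph{outside} $A_p$, and the unweighted-plus-$A_p$ machinery cannot be applied as written. What McOwen (and Lockhart--McOwen, Nirenberg--Walker) actually do is a dyadic--annulus/scaling argument: cover $\rn$ by annuli $A_j=\{2^{j}\le|x|\le 2^{j+1}\}$, rescale each to the unit annulus, apply \emph{local} elliptic $L^p$ estimates there, and sum the resulting inequalities with the weight factors $2^{j(\delta+|\alpha|)}$; the restriction $\delta\in(-2m/p,-2m/p+1)$ enters when one verifies invertibility of the model operator at each scale and the summability of the patched estimates, not through any $A_p$ condition. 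Relatedly, your far-field decay step is also more delicate than the sketch suggests: with only $\int f=0$ one cannot in general deduce $u=O(|x|^{-1})$, since $\int|y|\,|f(y)|\,dy$ need not be finite for $f\in L^p_{2m+\delta}$ when $\delta<1-2m/p$; the correct decay rate $|x|^{\delta'}$ for suitable $\delta'$ is extracted again by the dyadic decomposition, matching the weight in the definition of $M^p_{2m,\delta}$.
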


\begin{lem}[Lemma $2.3$ in \cite{H-M}]\label{compact}
For $\delta>-\frac{2m}{p}$, $p\ge 1$, the embedding  
$$E:M_{2m,\delta}^p(\rn)\hookrightarrow C_0(\rn)$$
is compact.
\end{lem}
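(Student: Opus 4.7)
My plan is to combine a classical Arzel\`a--Ascoli / Rellich--Kondrachov argument on bounded domains with a rescaling estimate controlling the behaviour of a function $f \in M_{2m,\delta}^p(\rn)$ at infinity. The rescaling estimate supplies the ``tightness'' needed to upgrade local uniform convergence of a subsequence to convergence in $C_0(\rn)$.

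The heart of the proof is the pointwise decay
$$|f(x)| \le C (1+|x|)^{-\delta - 2m/p}\,\|f\|_{M_{2m,\delta}^p}. $$
To establish it, I would fix $x$ with $|x|$ large, set $r := |x|/2$ and rescale by defining $g(z) := f(x+rz)$ for $z \in B_1$. On $B(x,r)$ one has $(1+|y|^2)^{1/2}\simeq |x|$, so the definition of the weighted norm gives $\|D^\beta f\|_{L^p(B(x,r))} \lesssim |x|^{-\delta - |\beta|}\|f\|_{M_{2m,\delta}^p}$ for every $|\beta| \le 2m$. A direct change of variables then produces the scale invariant bound
$$\|D^\beta g\|_{L^p(B_1)} \lesssim |x|^{-\delta - 2m/p}\,\|f\|_{M_{2m,\delta}^p}, \quad |\beta|\le 2m,$$
and the Sobolev embedding $W^{2m,p}(B_1)\hookrightarrow C^0(\overline{B_{1/2}})$ evaluated at $z=0$ yields the claim. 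Since $\delta > -2m/p$ the exponent is strictly negative, and the estimate forces uniform decay of bounded families at infinity.

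To conclude compactness, let $(f_n)$ be bounded in $M_{2m,\delta}^p(\rn)$. For each fixed $R>0$ the restricted family is bounded in $W^{2m,p}(B_R)$, and Rellich--Kondrachov furnishes a subsequence converging in $C^0(\overline{B_R})$; a standard diagonal extraction produces a single subsequence $f_{n_k}$ converging uniformly on every compact subset of $\rn$ to some $f\in C(\rn)$. The pointwise decay applied uniformly in $n$ allows one, given $\varepsilon>0$, to pick $R$ so large that $|f_{n_k}(x)| < \varepsilon$ whenever $|x|\ge R$; combining this with the uniform convergence on $\overline{B_R}$ through the triangle inequality gives $f_{n_k}\to f$ in $C_0(\rn)$. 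The only delicate point I anticipate is the Sobolev embedding used in the rescaling step: for $p>1$ it is immediate from $2mp > 2m$, while the borderline case $p=1$ requires a small refinement (e.g.\ first trading one derivative for integrability via $W^{1,1}(B_1)\hookrightarrow L^{2m/(2m-1)}(B_1)$ and then iterating Morrey), which I expect to be the only genuinely nonroutine ingredient.
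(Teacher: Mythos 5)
The paper does not give its own proof of this lemma; it is cited verbatim from \cite{H-M}, so a line-by-line comparison is not possible. That said, your approach---a rescaled Sobolev estimate giving the pointwise decay $|f(x)|\lesssim (1+|x|)^{-\delta-2m/p}\|f\|_{M^p_{2m,\delta}}$, combined with Rellich--Kondrachov on balls and a diagonal extraction---is the natural one for such weighted embeddings, and for $p>1$ every step you write is correct. (Small checks: on $B(x,|x|/2)$ one indeed has $(1+|y|^2)^{1/2}\simeq |x|$; the change of variables $y=x+rz$, $r=|x|/2$, produces $\|D^\beta g\|_{L^p(B_1)}\lesssim r^{|\beta|-2m/p}\|D^\beta f\|_{L^p(B(x,r))}\lesssim |x|^{-\delta-2m/p}\|f\|_{M^p_{2m,\delta}}$; and $W^{2m,p}(B_1)\hookrightarrow C^0(\overline B_{1/2})$ for $p>1$ by Morrey.)

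Where you go wrong is the treatment of $p=1$. You locate the ``delicate point'' in the \emph{continuous} embedding $W^{2m,1}(B_1)\hookrightarrow C^0$, but this one is actually unproblematic (it follows from the iterated fundamental theorem of calculus: $|u(x)|\le \|\partial_1\cdots\partial_{2m}u\|_{L^1}$). The step that genuinely fails at $p=1$ is the \emph{compactness} step: Rellich--Kondrachov does not give a compact embedding $W^{2m,1}(B_R)\hookrightarrow C^0(\overline B_R)$, since $2m\cdot 1 = 2m$ is exactly the borderline exponent. In fact the lemma as stated is false for $p=1$: taking $\phi\in C^\infty_c(\R^{2m})$ with $\phi(0)=1$ and $f_n(x):=\phi(nx)$, one has $\|D^\beta f_n\|_{L^1}=n^{|\beta|-2m}\|D^\beta\phi\|_{L^1}$ (and the weight is uniformly bounded on $\supp f_n$), so $(f_n)$ is bounded in $M^1_{2m,\delta}(\R^{2m})$; yet $f_n(0)=1$ while $f_n\to 0$ locally uniformly away from the origin, so no subsequence converges in $C_0(\R^{2m})$. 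The statement should read $p>1$, which is also what Lemma~\ref{isomorphism} requires and what the proof of Theorem~\ref{thm-1} actually uses. Once you restrict to $p>1$, your proof is complete and correct; you may simply delete the speculative $p=1$ paragraph.
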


We will construct a sequence $(u_k)_{k\in\N}$ of solutions to \eqref{eq1}-\eqref{vol0} of the form
\begin{align}
 u_k= -\beta|x|^2+c_k\varphi-\alpha_k u_0 +k+v_k, \quad \text{in }\Omega,\label{exp-uk}
\end{align}
for some $\beta\geq 0$ and
$v_k\in C^{2m-1}(\R^{2m})$ such that as $k\to \infty$
 $$\sup_{\Omega}|v_k|\to 0,\quad c_k\to \infty,\quad \alpha_k\to 0.$$
In general $\beta>0$ is an arbitrary fixed constant, but if $\vp$ satisfies \begin{equation}\label{condphi2}
\int_\Omega e^{2m\varphi}|x|^{2q}\, dx<\infty,\quad \text{for some }q>0,
\end{equation} 
then we can take $\beta=0$ as well.

We consider $$X:=C_0(\R^{2m}):=\left\{v\in C^0(\R^{2m}):\lim_{|x|\to\infty}v(x)=0\right\},\quad \|v\|_X=\sup_{x\in{\R^{2m}}}|v(x)|.$$ 
For $c\in\R$ we set
\[
F_{k,c}=\left\{
\begin{array}{ll}
V_ke^{2mk}e^{-2m\beta|x|^2}e^{2mc\vp}&\text{in }\Omega\\
0&\text{in }\R^{2m}\setminus \Omega.
\end{array}
\right.
\]
Let $\ve_1\in (0,\frac{q}{8m})$ (to be fixed later). We fix $p>1$ and $\delta\in(-\frac{2m}{p},\frac{2m}{p}+1)$ such that $p(2m+\delta)<\frac q4$.  For each $k\in \N$ we choose $c_k\geq k^2$ so that  
\begin{equation}\label{noname1} 
\int_{\R^{2m}}|F_{k,c_k}(x)|(M+|x|)^{q}dx\leq \ve_1e^{-k}e^{-2m},
\end{equation}
\begin{equation}\label{noname2}
\|F_k(M+|x|)^{\frac q4}\|_{L^p_{2m+\delta}}\leq \ve_1e^{-k},\quad F_k:=F_{k,c_k},
\end{equation}
\begin{equation}\label{noname3}
\int_\Omega e^{2m(c_k\varphi +k)}(M+|x|)^q\, dx\leq e^{-k},
\end{equation}
where $q$ is as in \eqref{condphi2} and $M>0$ is such that $e^{u_0}\leq M\,\text{on }B_2$. 
For each $k\in\N$, define a continuous function  $I_k$ on $X\times(-\frac{q}{2m},\frac{q}{2m})$ given by
$$I_{k}(v,\alpha)=\frac{1}{\gamma_{2m}}\int_{\R^{2m}}F_{k}e^{-2m\alpha u_0}e^{2mv}dx.$$
If $I_{k}(v,0)>0$ then 
$$\lim_{\alpha\to 0^+}\frac {I_{k}(v,\alpha)}{\alpha} =\infty,\quad \frac {I_{k}(v,\ve_1e^{-k})}{\ve_1e^{-k}}\leq 1,\quad \|v\|_X\leq 1,$$ and hence there exists
$\alpha\in (0,\ve_1e^{-k}]$ such that $I_{k}(v,\alpha)=\alpha$.  Notice that 
$$\sup_{\alpha\in [-\frac{q}{4m},0]}|I_k(v,\alpha)|\leq e^{-k}\varepsilon_1,\quad\text{for }\|v\|_X\leq 1.$$ Thus, if $I_{k}(v,0)<0$ then 
$$\lim_{\alpha\to 0^-}\frac {I_{k}(v,\alpha)}{\alpha} =\infty,\quad \frac {|I_{k}(v,-\ve_1e^{-k})|}{\ve_1e^{-k}}\leq 1,\quad \|v\|_X\leq 1,$$
and hence there exists $\alpha\in [-\ve_1e^{-k},0)$ such that $I_{k}(v,\alpha)=\alpha$. For $\|v\|_X\leq 1$ we define
$$
\alpha_{k,v}:=\left\{\begin{array}{ll}
                   \inf \{\alpha>0:\alpha=I_k(v,\alpha)\}&\quad\text{if }I_k(v,0)>0\\
                   \sup \{\alpha<0:\alpha=I_k(v,\alpha)\}&\quad\text{if }I_k(v,0)<0\\
                   0 &\quad\text{if }I_k(v,0)=0.
                     \end{array}
                     \right.
$$
From the continuity of $I_k$ it follows that $\alpha_{k,v}=I_k(v,\alpha_{k,v})$.

\begin{lem}
There exists $\ve_0>0$ such that for every   $\ve\in(0,\ve_0)$ and  for every $v\in B_1$ if 
$$ I_k(v,\alpha_v)=\alpha_v\quad\text{ for some $|\alpha_v|<\frac{q}{4m}$},$$
then for every $w\in B_{\ve^2}(v)\cap B_1$ there exists $\alpha_w\in(\alpha_v-\ve,\alpha_v+\ve)$ such that 
$$ I_k(w,\alpha_w)=\alpha_w.$$
Moreover, the map $v\mapsto \alpha_{k,v}$ is continuous on $B_1$. 
\end{lem}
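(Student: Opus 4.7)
The plan is to show that, for the fixed (large) $k$ in question, the map $\alpha\mapsto I_k(v,\alpha)-\alpha$ is strictly decreasing on $(-q/(4m),q/(4m))$ with slope uniformly close to $-1$, and that $I_k(\cdot,\alpha)$ is uniformly Lipschitz on $B_1\subset X$; both the existence of $\alpha_w$ and the continuity of $v\mapsto\alpha_{k,v}$ will then follow from the intermediate value theorem plus uniqueness of the fixed point.

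The key estimate is obtained by differentiating under the integral sign:
$$\partial_\alpha I_k(v,\alpha)=-\frac{2m}{\gamma_{2m}}\int_{\R^{2m}} F_k\,u_0\,e^{-2m\alpha u_0}\,e^{2mv}\,dx.$$
For $\|v\|_X\le 1$ we have $e^{2mv}\le e^{2m}$; the inequality $e^{u_0}\le M+|x|$ together with $u_0\ge0$ gives $e^{-2m\alpha u_0}\le (M+|x|)^{q/2}$ on $|\alpha|\le q/(4m)$; and, since $u_0$ grows at most logarithmically, $u_0\le C(1+|x|)^{q/2}$. Plugging in and using \eqref{noname1} we obtain
$$\sup_{|\alpha|\le q/(4m),\,\|v\|_X\le1}|\partial_\alpha I_k(v,\alpha)|\;\le\;C\ve_1 e^{-k}\;\le\;\tfrac12$$
for $k$ large. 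A parallel computation, using $|e^{2mw}-e^{2mv}|\le 2m e^{2m}\|w-v\|_X$ on $B_1$, yields
$$|I_k(w,\alpha)-I_k(v,\alpha)|\le C\ve_1 e^{-k}\|w-v\|_X\qquad\text{for }|\alpha|\le q/(4m).$$

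Note that any fixed point automatically satisfies $|\alpha_v|=|I_k(v,\alpha_v)|\le\ve_1 e^{-k}/\gamma_{2m}$, hence $[\alpha_v-\ve_0,\alpha_v+\ve_0]\subset(-q/(4m),q/(4m))$ for $\ve_0$ small. Fix in addition $\ve_0$ so that $C\ve_0<1/4$, where $C$ is the Lipschitz constant above. For $\ve\in(0,\ve_0)$ the slope bound applied around $\alpha_v$ yields
$$I_k(v,\alpha_v-\ve)-(\alpha_v-\ve)\ge\tfrac\ve2,\qquad I_k(v,\alpha_v+\ve)-(\alpha_v+\ve)\le-\tfrac\ve2,$$
and for $w\in B_{\ve^2}(v)\cap B_1$ the Lipschitz estimate perturbs each side by at most $C\ve^2<\ve/4$, so both sign conditions survive and IVT gives $\alpha_w\in(\alpha_v-\ve,\alpha_v+\ve)$ with $I_k(w,\alpha_w)=\alpha_w$.

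For the final statement, the monotonicity proved above implies that $I_k(v,\cdot)$ admits \emph{at most one} fixed point in $(-q/(4m),q/(4m))$; since $\alpha_{k,v}$ is such a fixed point (as already noted in the excerpt), it is the unique one, and likewise for $\alpha_{k,w}$. Applying the existence step with $\alpha_v=\alpha_{k,v}$ forces $\alpha_{k,w}\in(\alpha_{k,v}-\ve,\alpha_{k,v}+\ve)$ whenever $\|w-v\|_X\le\ve^2$, which gives the continuity (indeed $\tfrac12$-Hölder continuity) of $v\mapsto\alpha_{k,v}$ on $B_1$. The main subtlety is the uniform slope bound: it rests on the careful choice of $q$, $p$, $\delta$, $\ve_1$ in the earlier construction so that the factor $e^{-2m\alpha u_0}$ together with the logarithmic growth of $u_0$ fit inside the polynomial weight $(M+|x|)^q$ appearing in \eqref{noname1}. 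Without this uniformity $\alpha_v$ could be a tangent zero of $I_k(v,\cdot)-\mathrm{id}$, and IVT would break down.
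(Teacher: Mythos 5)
Your proof is correct, and it takes a genuinely different route from the paper. Where the paper truncates the integral to a ball $B_R$ with $R^q=\ve^{-2}$ and then performs a first-order Taylor expansion in $(\alpha_v-\alpha)$ and in $(w-v)$, solving explicitly for a root $\bar\rho$, you instead prove a uniform slope bound
$$\sup_{|\alpha|\le q/(4m),\ \|v\|_X\le 1}|\partial_\alpha I_k(v,\alpha)|\le \tfrac12,$$
which makes $\alpha\mapsto I_k(v,\alpha)-\alpha$ globally strictly decreasing on $(-\frac{q}{4m},\frac{q}{4m})$, and combine it with a uniform Lipschitz bound of $I_k(\cdot,\alpha)$ on $B_1$. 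Existence of $\alpha_w$ then comes from the intermediate value theorem, and, crucially, the monotonicity gives \emph{uniqueness} of the fixed point. This uniqueness is what makes your continuity argument cleaner: you immediately identify the $\alpha_w$ produced by IVT with $\alpha_{k,w}$ and get $\tfrac12$-H\"older continuity of $v\mapsto \alpha_{k,v}$ in one stroke, whereas the paper's proof, which does not establish uniqueness of fixed points, must go through a case analysis ($\alpha_{k,v}=0$, $\alpha_{k,v}>0$, $\alpha_{k,v}<0$) and compare $\alpha_{k,v}$ with a subsequential limit $\alpha_\infty$ to conclude continuity. Both proofs ultimately rest on the same quantitative input, namely that \eqref{noname1} makes the weighted integral $\int|F_k|(M+|x|)^q\,dx$ small, and that $e^{-2m\alpha u_0}u_0$ is dominated by $C(M+|x|)^q$ on $|\alpha|\le q/(4m)$; your differentiation under the integral is justified by exactly this dominating function. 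One small caveat worth making explicit is that the bound $|\partial_\alpha I_k|\le\tfrac12$ requires $C\ve_1 e^{-k}\le\tfrac12$, i.e. a suitably small fixed $\ve_1$ (uniformly in $k\ge1$); you acknowledge this at the end, and it matches the paper's own deferred choice of $\ve_1$.
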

\begin{proof}
Let $R>0$ be such that $R^q=\frac{1}{\ve^2}$. With this particular choice of $R$ we have 
$$ \int_{B_R^c} |F_k| \left(1+|x|\right)^{q}\, dx\leq C\ve^2.$$
Now for $|\alpha_v-\alpha|(2m\log R)^2<\frac 12$ we have
\begin{equation*}
\begin{split} 
&\frac{1}{\gamma_{2m}}\int_{B_R}F_{k}e^{-2m\alpha u_0}e^{2mw}\, dx\\
&=\frac{1}{\gamma_{2m}}\int_{B_R}F_k e^{-2m\alpha_v u_0}e^{2mv}e^{2m(w-v)}e^{2m(\alpha_v-\alpha)u_0}\, dx\\
&=\frac{1}{\gamma_{2m}}\int_{B_R}F_k e^{-2m\alpha_v u_0}e^{2mv}\left(1+ 2m(\alpha_v-\alpha)u_0+O\left(\alpha_v-\alpha\right)\right)\left( 1+O(\ve^2)\right)\, dx\\
&=I_k(v,\alpha_v)+\frac{2m(\alpha_v-\alpha)}{\gamma_{2m}}(1+O(\ve^2))\int_{B_R}F_k e^{-2m\alpha_v u_0}e^{2mv}u_0\, dx\\
&\quad +O\left(\alpha_v-\alpha\right)\int_{B_R}F_k e^{-2m\alpha_v u_0}e^{2mv}\, dx+O(\ve^2)\\
&=:I_k(v,\alpha_v)+\frac{2m(\alpha_v-\alpha)}{\gamma_{2m}}(1+O(\ve^2))J_1+O\left(\alpha_v-\alpha\right)J_2+O(\ve^2).
\end{split}
\end{equation*}
Using \eqref{noname1} we get
\[
\begin{split}
|J_1|&\leq e^{2m}\int_{B_R}|F_k|e^{-2m\alpha_v u_0}u_0\, dx\leq e^{2m}\int_{B_R}|F_k| (M+|x|)^{\frac q2}u_0\, dx\\
&\leq C(q)e^{2m}\int_{B_R} |F_k|(M+|x|)^q\, dx\leq C(q) \ve_1,
\end{split}
\]
and $J_2=O(\ve_1)$. Let $\alpha=\alpha_v+\rho$, with $|\rho|\leq \frac{1}{2(2m\log R)^2}$. Then 
\[
I_k(w,\alpha_v+\rho)-(\alpha_v+\rho)=\rho+O(\ve^2)+\rho O(\ve_1).
\]
We fix $\ve_0>0$ and $\ve_1>0$ such that for every $\ve\in (0,\ve_0)$ we have  $|O(\ve^2)|\leq \frac{\ve}{4}$  and
$|O(\ve_1)|\leq \frac 14$.  Then we can choose $\bar\rho\in(-\ve,\ve)$ such that
$$|\bar\rho|\leq \frac{1}{2(2m\log R)^2},\quad \bar\rho+O(\ve^2)+\bar\rho O(\ve_1)=0,$$
concluding the first part of the lemma.

Now we prove the continuity of the map  $v\mapsto \alpha_{k,v}$ from $B_1$ to $\R$.  

  For $v_n\to v\in B_1$ it follows that  (at least) for large $n$, 
$|\alpha_{k,v_n}|<\frac{q}{4m}$ and $|\alpha_{k,v}|<\frac{q}{4m}$. 
First we consider the case $\alpha_{k,v}=0$. Then for any $\ve>0$  one has 
$I_{k}(v_n,\alpha_{v_n})=\alpha_{v_n}$ for some $\alpha_{v_n}\in  (-\ve, \ve)$  where  $\|v-v_n\|_X<\ve^2$. This follows from 
the  first part of the lemma. Since $|\alpha_{k,v_n}|\leq |\alpha_{v_n}|$, we have the continuity. 

Now we consider $\alpha_{k,v}>0$ (negative case is similar). Then $I_k(v,0)>0$, and hence $\alpha_{k,v_n}\geq 0$ for large $n$. We set 
$\alpha_\infty:=\lim_{n\to\infty}\alpha_{k,v_n}$ (this limit exists at least for a subsequence). From the continuity of the map
$I_k$ it follows that $I_k(v,\alpha_\infty)=\alpha_\infty$. Since $\alpha_\infty\geq0$  and $I_k(v,0)>0$, we must have $\alpha_\infty>0$. From the definition
of $\alpha_{k,v}$ we deduce that $\alpha_{k,v}\leq \alpha_\infty$. 
We fix $\ve\in (0,\frac{\alpha_{k,v}}{2})$. Then by the first part of the lemma there exists 
$\alpha_{v_n}\in (\alpha_{k,v}-\ve,\alpha_{k,v}+\ve)$ such that $I_k(v,\alpha_{v_n})=\alpha_{v_n}$ for every
$\|v-v_n\|_X<\ve^2$. Since $\alpha_{k,v_n}\leq\alpha_{v_n}$ and $\alpha_{k,v_n}\to\alpha_\infty$, we have for $n$ large 
$$\alpha_{k,v}\leq \alpha_\infty\leq \alpha_{k,v_n}+\ve\leq \alpha_{v_n}+\ve\leq \alpha_{k,v}+2\ve.$$ We conclude the lemma. 
\end{proof}

\medskip 

\noindent
\emph{Proof of Theorem \ref{thm-1}}
We define $T_k:B_1\subset X\to X$, $v\mapsto \bar{v}$, where 
$$\bar v(x):=\frac{1}{\gamma_{2m}}\int_{\rn} \log\left(\frac{1}{|x-y|}\right) F_k(y) e^{-2m\alpha_{k,v}u_0+2mv(y)}\, dy+\alpha_{k,v}u_0,$$
that is $\bar v$ solves 
$$(-\D)^m\bar{v}=F_k e^{-2m\alpha_{k,v}u_0+2mv}+\alpha_{k,v}(-\D)^mu_0.$$ 
Notice that arguing as in \cite{H-M} one gets $\bar v\in X$. Using \eqref{unot} and our choice of $\alpha_{k, v}$ we have 
$$ \int_{\rn}(-\Delta)^m \bar v\, dx=0.$$
With our choice of $\delta$ and $p$ we have $\bar v\in M^p_{2m,\delta}(\rn)$.
For $v\in \bar{B}_1\subset X$ we bound with Lemma \ref{isomorphism}, Lemma \ref{compact} and \eqref{noname2}
\begin{align*}
 \|T_k(v)\|_X&\leq C_1\|T_k(v)\|_{M^p_{2m,\delta}}\leq C_1\|(-\D)^m\bar{v}\|_{\Gamma^p_\delta},\\
 &\leq C_1\|e^{-2m\alpha_{k,v}u_0}F_k\|_{L^p_{2m+\delta}}+C_1|\alpha_{k,v}|\|(-\D)^mu_0\|_{L^p_{2m+\delta}}\xrightarrow{k\to\infty}0.
\end{align*}
Therefore, for $\ve_1$ small enough, $\|T_k(v)\|_X\leq \frac 12$ and there exists a fixed point $v_k$ for every $k$. 
Hence,  thanks to \eqref{noname3}, the sequence 
$$u_k(x)=-\beta|x|^2-\alpha_{k,v_k}u_0(x)+c_k\vp(x)+k+v_k(x),\quad x\in \Omega, $$ is
a sequence of solutions with the stated properties.
\hfill $\square$

\section{Proof of Theorem \ref{thm-2} and Corollary \ref{corollary}}

A slightly different version of the following proposition appears in \cite{A-H}. For the sake of completeness we give a sketch of the proof. 

\begin{prop}\label{sol-prop}
Let $w_0(x)=\log\frac{2}{1+|x|^2}$ and consider two functions $K,f:\R^{2m}\to\R$ such that
$$K\geq 0,\quad K\not\equiv0,\quad Ke^{-2mw_0}\in L^\infty(\R^{2m})$$
and
$$f e^{-2mw_0}\in L^\infty(\R^{2m}),\quad \Lambda:=\int_{\R^{2m}}fdx\in (0,\Lambda_1).$$
Then there exists a function  $w\in C^{2m-1}(\R^{2m})$ and a constant $c_w$ such that 
 \begin{equation}\label{w-ell}
   (-\Delta)^mw=Ke^{2m(w+c_w)}- f \quad  \text{in $\R^n$}, \quad \int_{\R^{2m}}Ke^{2m(w+c_w)}dx=\Lambda,
  \end{equation}
  and $\lim_{|x|\to\infty} w(x)\in\mathbb{R}$. Moreover, if $f$ is of the form $f=(-\D)^mg $ for some $g\in C^{2m}(\R^{2m})$ with $g(x)=O(\log |x|)$ at infinity, then 
  $w$ satisfies 
  $$w(x)=\frac{1}{\gamma_{2m}}\int_{\R^{2m}}\log\left(\frac{1+|y|}{|x-y|}\right)K(y)e^{2m(w(y)+c_w)}-g(x)+C,$$ for some $C\in\R$.
\end{prop}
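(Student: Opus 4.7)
The plan is to realize $w$ as a critical point of a variational functional and then to define $c_w$ to enforce the prescribed total mass. On a suitable Hilbert space $H$ (for instance, the homogeneous Sobolev space obtained by completing $C_c^\infty(\R^{2m})$ under $\|u\|_H^2 := \int_{\R^{2m}}|(-\Delta)^{m/2}u|^2\,dx$, with a normalization pinning down the constant at infinity) consider
\begin{equation*}
J(u):=\frac{1}{2}\int_{\R^{2m}}|(-\Delta)^{m/2}u|^2\,dx+\int_{\R^{2m}}fu\,dx-\frac{\Lambda}{2m}\log\int_{\R^{2m}}Ke^{2mu}\,dx.
\end{equation*}
At a critical point one has $(-\Delta)^m u=\Lambda Ke^{2mu}/\int_{\R^{2m}}Ke^{2mu}\,dx-f$, so setting $c_w:=(2m)^{-1}\log\bigl(\Lambda/\int Ke^{2mu}\,dx\bigr)$ and $w:=u$ solves \eqref{w-ell} with volume exactly $\Lambda$.

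Existence of a minimizer follows from the direct method. The decay $Ke^{-2mw_0},fe^{-2mw_0}\in L^\infty$ yields $K,f=O((1+|x|)^{-4m})$, so $u\mapsto \int fu\,dx$ is continuous on $H$ and the weighted integral $\int Ke^{2mu}\,dx$ is well-behaved. The crucial coercivity of $J$ comes from a sharp Moser--Trudinger--Adams inequality of the form
\begin{equation*}
\log\int_{\R^{2m}}Ke^{2mu}\,dx\leq \frac{m}{\Lambda_1}\|u\|_H^2+C,\qquad u\in H,
\end{equation*}
which I would borrow from \cite{WY} (or derive by stereographic reduction to $S^{2m}$ and the Beckner--Fontana inequality). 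Since $\Lambda<\Lambda_1$ strictly, the quadratic term dominates, giving $J(u)\geq \tfrac12(1-\Lambda/\Lambda_1)\|u\|_H^2 - C_1\|u\|_H - C_2$, so $J$ is bounded below and minimizing sequences are bounded in $H$. Weak lower semicontinuity of the quadratic and linear terms is standard, while compactness of $u\mapsto\log\int Ke^{2mu}\,dx$ along weakly convergent sequences follows from the decay of $K$ together with exponential integrability. A minimizer $w\in H$ thus exists.

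Elliptic bootstrap applied to $(-\Delta)^m w=Ke^{2m(w+c_w)}-f\in L^1\cap L^\infty_{\loc}$ promotes $w$ to $C^{2m-1}$. Since both sides integrate to zero, the fundamental-solution identity \eqref{gammam} combined with the fast decay of the right-hand side yields
\begin{equation*}
w(x)=\frac{1}{\gamma_{2m}}\int_{\R^{2m}}\log\frac{1+|y|}{|x-y|}\bigl(Ke^{2m(w+c_w)}-f\bigr)(y)\,dy+C,
\end{equation*}
where the $\log(1+|y|)$ compensates the zero mean of the integrand and is absorbed by the decay; this forces $\lim_{|x|\to\infty}w(x)=C\in\R$. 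When $f=(-\Delta)^m g$ with $g=O(\log|x|)$, integration by parts against smooth cutoffs gives $\frac{1}{\gamma_{2m}}\int\log\frac{1+|y|}{|x-y|}(-\Delta)^m g(y)\,dy = g(x)+C'$, which upon substitution produces the stated representation for $w$ in terms of $K$, $c_w$ and $g$.

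The hard part will be the sharp Moser--Trudinger--Adams inequality in its global, weighted form on $\R^{2m}$: on bounded domains this is Adams' theorem, but on all of $\R^{2m}$ one needs a version sensitive to the weight $K$ and to the noncompactness (which also explains the care required in defining the function space $H$ precisely, since constants lie in the kernel of $(-\Delta)^{m/2}$ but must be pinned down to make $\int fu\,dx$ meaningful). The strict inequality $\Lambda<\Lambda_1$ is essential and the argument breaks at $\Lambda=\Lambda_1$, paralleling the open problem stated after Theorem \ref{thm-2}.
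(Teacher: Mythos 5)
Your plan is the same variational strategy the paper uses, and indeed the two functionals are the same object: under stereographic projection, the paper's
\[
J(u)=\int_{S^{2m}}\Bigl(\tfrac12|(P^{2m}u)^{1/2}|^2+\tilde f_1u\Bigr)dV_0-\frac{\Lambda}{2m}\log\int_{S^{2m}}\tilde Ke^{-2mw_0\circ\pi}e^{2mu}\,dV_0
\]
transforms exactly into the $J$ you wrote on $\R^{2m}$ (the conformal Jacobian $e^{2mw_0\circ\pi}$ cancels the weight $e^{-2mw_0}$, and $(-\Delta)^m$ corresponds to $P^{2m}$ by Branson's identity). The genuine difference is \emph{where} the variational problem is posed. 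The paper lifts to $S^{2m}$ at the outset, obtaining a compact manifold, the standard Sobolev space $H^m(S^{2m})$, routine compactness of minimizing sequences, the off-the-shelf Beckner/Fontana inequality, and regularity via the simple embedding $H^{2m}(S^{2m})\hookrightarrow C^0(S^{2m})$; the existence of the critical point is then outsourced to \cite{A-H}. You instead work directly on $\R^{2m}$, which is morally equivalent but forces you to confront precisely the pathologies you flag as ``the hard part'': at the conformal exponent $m=n/2$ the completion of $C_c^\infty$ in the seminorm $\|(-\Delta)^{m/2}\cdot\|_{L^2}$ does not cleanly contain constants (which are in the kernel), the space does not embed into $L^\infty$, a global weighted Adams inequality on $\R^{2m}$ has to be established rather than quoted, and the continuity of $u\mapsto\int fu$ requires the mean-zero structure ($\int f\,dx=\Lambda\ne 0$, so this linear functional is not even well defined on the natural homogeneous space without extra normalization). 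None of these are fatal, but they are exactly what the sphere model makes disappear, so the paper's route is materially shorter. Your closing steps --- $c_w$ from the normalization, bootstrap to $C^{2m-1}$, the log-kernel representation formula and absorbing $g$ when $f=(-\Delta)^mg$ --- match the paper's.
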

\begin{proof}
 Let $\pi$ be the stereographic  projection from $S^{2m}$ to $\R^{2m}$.
We define the functional $J$ on $H^m(S^{2m})$ given by
$$J(u)=\int_{S^{2m}}\left(\frac12|(P^{2m}u)^\frac12|^2+\tilde f_1u\right)dV_0-
       \frac{\Lambda}{2m}\log\left(\int_{S^{2m}}\tilde Ke^{-2mw_0\circ\pi}e^{2mu}dV_0\right),$$
       where $f_1:=fe^{-2mw_0}$, $\tilde f_1:=f_1\circ\pi$, $\tilde K:=K\circ\pi$ and $P^{2m}$ is the Paneitz operator of order $2m$ with respect to the standard metric on $S^{2m}$.
 Following the arguments in \cite{A-H} one can show that there exists $u\in H^{2m}(S^{2m})$ such that 
 $$P^{2m}u=\frac{\Lambda \tilde Ke^{-2mw_0\circ\pi}e^{2mu}}{\int_{S^{2m}}\tilde Ke^{-2mw_0\circ\pi}e^{2mu}dV_0}-\tilde f_1=:C_0\tilde Ke^{-2mw_0\circ\pi}e^{2mu}-\tilde f_1.$$
 Notice that $P^{2m}u\in L^\infty(S^{2m})$, thanks to the embedding  $H^{2m}(S^{2m})\hookrightarrow C^{0}(S^{2m})$, and hence $u\in C^{2m-1}(S^{2m})$.
 
   We set $w=u\circ\pi^{-1}$. Then  $w\in C^{2m-1}(\R^{2m})$ 
 and $\lim_{|x|\to\infty} w(x)\in\mathbb{R}$. 
 Using the following identity of Branson (see \cite{Branson}) 
 $$(-\D)^m v=e^{2mw_0}(P^{2m}v)\circ\pi^{-1},\quad \text{for every }v\in C^\infty(S^{2m}),$$ and by an approximation argument, we have that 
 $$(-\D)^mw=C_0Ke^{2mw}-f=:Ke^{2m(w+c_w)}-f,\quad \text{in }\R^{2m}.$$ 
 
 Now we set 
 $$\tilde w(x):=\frac{1}{\gamma_{2m}}\int_{\R^{2m}}\log\left(\frac{1+|y|}{|x-y|}\right)K(y)e^{2m(w(y)+c_w)}-g(x).$$ 
 Then $\D^m(w-\tilde w)=0$ in $\R^{2m}$ and $(w-\tilde w)(x)=O(\log |x|)$ at infinity. Therefore, $w=\tilde w+C$ for some $C\in \R$. 
 
This finishes the proof of the proposition. 
 \end{proof}

\medskip
\noindent
\emph{Proof of  Theorem \ref{thm-2}}
Let $\vp\in \mathcal{K}(\Omega,\emptyset)$ and let  $u_0\in C^\infty(\R^{2m})$ be such that $u_0=-\log|x|$ on $B_1^c$. 
We set $f=\frac{2\Lambda}{\Lambda_1}(-\D)^m u_0$. For each $k\in\N$ we set 
$$K=K_k:=V_k e^{2m(-\beta|x|^2+k\vp+\alpha u_0)},\quad \alpha:=\frac{2\Lambda}{\Lambda_1},\quad \beta>0,$$ 
and we extend $K_k$ by $0$ outside $\Omega$. 
Then by Proposition \ref{sol-prop} there exists a sequence of functions $(w_k)$ satisfying
$$w_k(x)=\frac{1}{\gamma_{2m}}\int_{\R^{2m}}\log\left(\frac{1+|y|}{|x-y|}\right)K_k(y) e^{2m(w_k(y)+c_{w_k})}dy-\frac{2\Lambda}{\Lambda_1}u_0+a_k,$$
for some $a_k\in \R$. 
We set $$u_k(x):=w_k +c_{w_k}-\beta|x|^2+k\vp(x)+\frac{2\Lambda}{\Lambda_1} u_0(x),\quad x\in\Omega\cup S_\vp^*.$$ 
Then $u_k$ satisfies 
\[\begin{split}
u_k(x)&=\frac{1}{\gamma_{2m}}\int_{\Omega}\log\left(\frac{1+|y|}{|x-y|}\right)V_k e^{2mu_k(y)}dy-\beta|x|^2+k\vp(x)+c_k\\
\end{split}
\]
and also \eqref{eq-2}, where $c_k:=a_k+c_{w_k}$. We conclude the proof with Lemma \ref{lemmause}.
%
\hfill $\square$


\begin{lem}\label{lemmause}
 Let $\Omega$ be a domain in $\R^{2m}$. Let $\varphi$ and $V_k$ as in Theorem \ref{thm-2}. Let $(u_k)$ be a sequence of solutions to 
 $$u_k(x)=\frac{1}{\gamma_{2m}}\int_{\Omega}\log\left(\frac{1+|y|}{|x-y|}\right)V_k e^{2mu_k(y)}dy-\beta|x|^2+k\vp(x)+c_k,\quad x\in\Omega\cup S_\vp^*,$$
for some $\beta> 0$. 
 Assume that 
 $$ 
  \int_{\Omega}V_k e^{2mu_k(y)}dy= \Lambda<\frac{\Lambda_1}{2}.
 $$
 Then $c_k\to\infty$, $c_k=o(k)$ and  
 $$I_k(x):=\frac{1}{\gamma_{2m}}\int_{\Omega}\log\left(\frac{1+|y|}{|x-y|}\right)V_k e^{2mu_k(y)}dy,\quad x\in\R^{2m},$$
 is locally uniformly bounded from above on $\Omega\setminus S_\vp$, and locally uniformly bounded from below on $\R^{2m}$. In particular,
 $u_k\to\infty$ on $S_\vp^*$ and $u_k\to-\infty$ locally uniformly on $\Omega\setminus S_\vp$.
  \end{lem}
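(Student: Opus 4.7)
The plan is to exploit the identity
\begin{equation*}
e^{2mc_k}\int_\Omega V_k\,\exp\!\bigl(2mI_k(y)-2m\beta|y|^2+2mk\vp(y)\bigr)\,dy=\Lambda,
\end{equation*}
obtained by substituting the given formula for $u_k$ into \eqref{eq-2}, together with a Br\'ezis--Merle-type bound which, since $\Lambda<\gamma_{2m}$, makes $(e^{2mI_k})$ uniformly bounded in $L^p_{\loc}(\R^{2m})$ for some $p>1$. All four conclusions of the lemma will be read off from this identity.

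The lower bound on $I_k$ is elementary: for $x$ in a compact $K\Subset\R^{2m}$ and any $y\in\R^{2m}$ one has $|x-y|\le C_K(1+|y|)$, hence $\log\tfrac{1+|y|}{|x-y|}\ge -\log C_K$, and integrating against $V_ke^{2mu_k}$ of total mass $\Lambda$ gives $I_k(x)\ge -\tfrac{\Lambda\log C_K}{\gamma_{2m}}$. To obtain $c_k=o(k)$, I would fix $\eta>0$ arbitrary and use the continuous extension of $\vp$ to $x_0\in S_\vp^*$ to find $\ve_\eta>0$ with $\vp\ge-\eta$ on $B_{\ve_\eta}(x_0)\cap\Omega$; combined with the just-proved lower bound of $I_k$ there and the integral hypothesis \eqref{condVk}, this yields
$$\int_\Omega V_k\,e^{2m(I_k-\beta|y|^2+k\vp)}\,dy\;\ge\; c_\eta\,e^{-2mk\eta},$$
hence $c_k\le k\eta+O_\eta(1)$; letting $\eta\to 0$ gives $c_k/k\to 0$.

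To obtain $c_k\to+\infty$ I would show that the same integral tends to $0$ by splitting according to $\Omega_\tau:=\{y\in\Omega:\vp(y)>-\tau\}$. On $\Omega\setminus\Omega_\tau$ the factor $e^{2mk\vp}\le e^{-2mk\tau}$, combined with H\"older, the uniform $L^p$-bound of $e^{2mI_k}$, and the Gaussian decay $e^{-2m\beta|y|^2}$ (which tames the unbounded direction), gives a contribution that vanishes as $k\to\infty$ for fixed $\tau$. On $\Omega_\tau$ I would use $|\Omega_\tau\cap B_R|\to 0$ as $\tau\to 0$, which holds because $\vp$ is real-analytic as a polyharmonic function so $|S_\vp|=0$; a further H\"older estimate (plus the Gaussian tail) bounds the contribution by $C|\Omega_\tau\cap B_R|^{1/p'}+o(1)_R$, uniformly in $k$. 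Sending $k\to\infty$, $R\to\infty$, then $\tau\to 0$ yields $c_k\to+\infty$.

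For the upper bound on $I_k$ on a compact $K\subset\Omega\setminus S_\vp$, I would fix a neighbourhood $U$ of $K$ with $\overline U\subset\Omega\setminus S_\vp$ compact and first prove $V_ke^{2mu_k}\to 0$ in $L^p(U)$ for some $p>1$: the inequality $u_k\le I_k+c_k+k\vp-\beta|y|^2$, the estimate $\vp\le-\ve_U<0$ on $U$, the fact $c_k=o(k)$, and the uniform $L^p$-integrability of $e^{2mpI_k}$ together give $\int_U(V_ke^{2mu_k})^p\le e^{-mpk\ve_U}\,C\to 0$. Then, for $x\in K$, I would split the integral defining $I_k(x)$ into (i)~$|y|\ge R$ (with $R$ depending on $K$) where $\log\tfrac{1+|y|}{|x-y|}$ is bounded by an absolute constant and $\int V_ke^{2mu_k}\le\Lambda$ suffices; (ii)~$|y|\le R$ with $y\notin U$, where $|x-y|$ is bounded below and the kernel is bounded; (iii)~$y\in U$, where the $L^{p'}$-integrable logarithmic singularity is paired with the $L^p$-vanishing $V_ke^{2mu_k}$. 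This yields $\sup_{x\in K}I_k(x)\le C_K$ uniformly in $k$, and the stated pointwise limits of $u_k$ on $S_\vp^*$ and on $\Omega\setminus S_\vp$ follow directly from the formula for $u_k$ together with the four bounds just established. The main obstacle is ensuring the Br\'ezis--Merle $L^p$ estimate holds uniformly in $k$ on the possibly unbounded $\Omega$, which requires combining the classical bounded-domain version with the tail decay from the Gaussian factor $e^{-2m\beta|y|^2}$.
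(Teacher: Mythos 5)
Your proof is correct and follows essentially the same strategy as the paper's: a lower bound on $I_k$ from the elementary kernel estimate, an $L^p$-type bound on exponentials of the logarithmic potential using $\Lambda<\gamma_{2m}$ (the paper phrases this via Jensen's inequality applied to the probability measure $V_ke^{2mu_k}\,dy/\Lambda$, which is exactly your ``Br\'ezis--Merle-type bound''), the mass identity $\Lambda=e^{2mc_k}\int_\Omega V_ke^{2m(I_k-\beta|y|^2+k\vp)}dy$ to force $c_k\to\infty$, and the integral hypothesis \eqref{condVk} near a point of $S_\vp^*$ to force $c_k=o(k)$.

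The organizational differences are minor and do not change the substance. For $c_k\to\infty$, the paper applies Jensen directly to $e^{2mu_k(x)}$ and then Fubini, using that $\int_\Omega\bigl(\tfrac{1+|y|}{|x-y|}\bigr)^p e^{-2m\beta|x|^2+2mk\vp(x)}dx\to 0$; you instead carry out an explicit level-set decomposition $\Omega=\Omega_\tau\cup(\Omega\setminus\Omega_\tau)$ together with the observation $|S_\vp|=0$ (from real-analyticity of polyharmonic functions). For the upper bound on $I_k$ you establish the slightly stronger fact $\|V_ke^{2mu_k}\|_{L^p(U)}\to 0$ rather than just boundedness, and split the integral into three regions instead of two; the mechanism (H\"older pairing the $L^{p'}$-integrable logarithmic singularity against the $L^p$-small density) is the same. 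Your treatment is correct, and if anything slightly more explicit about the role of the Gaussian factor $e^{-2m\beta|y|^2}$ in controlling the unbounded directions of $\Omega$ — a point the paper leaves implicit.
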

\begin{proof}
 For any fixed $R>0$ and $x\in B_R$ we bound
\[
\begin{split}
I_k(x)&=\int_{|y|\leq 2R,\,y\in\Omega}\log\left(\frac{1+|y|}{|x-y|}\right)V_k e^{2mu_k(y)}dy  +\int_{|y|>2R,\,y\in\Omega}\log\left(\frac{1+|y|}{|x-y|}\right)V_k e^{2mu_k(y)}dy\\
&\geq -C(R)+\int_{|y|>2R,\,y\in\Omega}\log\left(\frac 12+\frac{1}{2|y|}\right)V_k e^{2mu_k(y)}dy\\
&\geq -C(R).
\end{split}
\]
Since $\Lambda<\frac{\Lambda_1}{2}$, using Jensens inequality we obtain for some $p<2m$
\begin{align*}
 e^{2mu_k(x)}\leq e^{2mc_k}e^{-2m\beta|x|^2+2mk\vp(x)}\int_{\R^{2m}}\left(\frac{1+|y|}{|x-y|}\right)^pV_k(y)e^{2mu_k(y)}dy.
\end{align*}
Using that 
\begin{align*}
 \int_{\Omega}\left(\frac{1+|y|}{|x-y|}\right)^pe^{-2m\beta|x|^2+2mk\vp(x)}dx
 \xrightarrow{k\to\infty}0,
\end{align*}
and together with Fubini theorem, one has
\begin{align*}
 \int_{\Omega}V_k(x)e^{2mu_k(x)}dx
 =e^{2mc_k}o(1),\quad\text{as }k\to\infty. 
\end{align*}
Now $\Lambda>0$ implies that $c_k\to\infty$. 

We assume by contradiction that $c_k\neq o(k)$. Then for some $\ve>0$ we have $\frac{c_k}{k}\geq 2\ve$ for $k$ large. 
Let $x_0\in S_\vp^*$ be such that \eqref{condVk} holds. Let $\delta>0$ be such that
$\vp(x)>-\ve$ for $x\in B_\delta(x_0)\cap\Omega$. Therefore 
$$u_k(x)\geq -C-k\ve+c_k\geq -C+k\ve,\quad x\in B_\delta(x_0)\cap\Omega,$$ and hence 
$$\int_{\Omega}V_ke^{2mu_k}dx\geq e^{-C+k\ve}\int_{B_\delta(x_0)}V_kdx\xrightarrow{k\to\infty}\infty,$$ a contradiction.

Now we prove that  $I_k$ is locally uniformly bounded from above on $\Omega\setminus S_\vp$. For $\tilde\Omega\Subset \Omega\setminus S_\vp$ we have 
$$k\vp+c_k\to-\infty\quad\text{uniformly on } \tilde\Omega.$$ Using Jensens inequality one can show that $\|e^{2mu_k}\|_{L^p(\Omega_1)}\leq C$ 
for some $p>1$, where $\tilde\Omega\Subset\Omega_1\Subset\Omega\setminus S_\vp$. For $x\in\tilde\Omega$ we obtain by   H\"older inequality 
\begin{align*}
 I_k(x)&= \frac{1}{\gamma_{2m}}\int_{\Omega_1^c\cap\Omega}\log\left(\frac{1+|y|}{|x-y|}\right)V_k e^{2mu_k(y)}dy +
                 \frac{1}{\gamma_{2m}}\int_{\Omega_1\cap\Omega}\log\left(\frac{1+|y|}{|x-y|}\right)V_k e^{2mu_k(y)}dy\\
  &\leq C+ C\|\log|x-\cdotp|\|_{L^{p'}(\Omega_1)}  \|e^{2mu_k}\|_{L^p(\Omega_1)}    \\
  &\leq C.
\end{align*}
The remaining part of the lemma follows immediately. 
\end{proof}

\noindent\emph{Proof of Corollary \ref{corollary}.}
Let $g\in C^\infty(\de\Omega)$ be such that $g\leq 0$, $g\not\equiv 0$ on $\de\Omega$ and $g=0$ on $\Gamma$. Let $\varphi$ be the solution to
\begin{equation}\notag
\begin{cases}
(-\Delta)^m\varphi=0\quad&\text{ in $\Omega$,}\\
(-\Delta)^j\varphi= 0\quad &\text{ on $\de\Omega$, \quad $j=1,\dots,m-1$}\\
\varphi=g\quad&\text{ on }\de\Omega.
\end{cases}
\end{equation}
Then by maximum principle $\varphi<0$ in $\Omega$ and hence $S^*_\varphi=\Gamma$. Then the conclusion follows by Theorem \ref{thm-1} and \ref{thm-2}.
\hfill$\square$

\medskip

\begin{prop}\label{non}
 Let $\Omega$ be a  domain in $\R^{2m}$. Let $\vp\in \mathcal{K}(\Omega,\emptyset)$. 
 Let $\tilde\Omega\Subset \Omega\setminus S_\vp$ be an open set.  Let $V_k$ be such that $V_k\equiv 0$ on $\tilde{\Omega}^c$ and $V_k\equiv 1$ on $\tilde \Omega$. Then for any $\Lambda>0$ there exists 
 no sequence $(u_k)$ of solutions to \eqref{eq1} satisfying \eqref{ukinfty} and \eqref{eq-2}.
\end{prop}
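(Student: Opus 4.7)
The plan is to derive a direct contradiction by comparing the prescribed curvature integral \eqref{eq-2} with the decay of $e^{2mu_k}$ on $\tilde\Omega$, where $V_k$ is supported. Since $\tilde\Omega \Subset \Omega\setminus S_\vp$, by definition of $\Subset$ the closure $\overline{\tilde\Omega}$ is a compact subset of $\Omega\setminus S_\vp$. In particular $|\tilde\Omega|<\infty$.

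First, I would invoke \eqref{ukinfty}: since $u_k\to-\infty$ locally uniformly in $\Omega\setminus S_\vp$ and $\overline{\tilde\Omega}$ is a compact subset of this set, the convergence $u_k\to-\infty$ is uniform on $\overline{\tilde\Omega}$. Consequently $e^{2mu_k}\to 0$ uniformly on $\tilde\Omega$, and the trivial bound
\[
\int_{\tilde\Omega} e^{2mu_k}\,dx \le |\tilde\Omega|\,\exp\!\Bigl(2m\max_{\overline{\tilde\Omega}} u_k\Bigr) \xrightarrow{k\to\infty} 0
\]
follows immediately.

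Second, I would use the hypothesis on $V_k$: since $V_k\equiv 0$ outside $\tilde\Omega$ and $V_k\equiv 1$ on $\tilde\Omega$, we have
\[
\int_\Omega V_k e^{2mu_k}\,dx = \int_{\tilde\Omega} e^{2mu_k}\,dx.
\]
By \eqref{eq-2} the left-hand side equals $\Lambda$ for every $k$, while the previous step shows the right-hand side tends to $0$. Since $\Lambda>0$, this is a contradiction, proving that no such sequence $(u_k)$ exists.

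There is no real obstacle: the argument is immediate once one observes that $V_k$ is supported precisely in a region where $u_k$ is forced to go to $-\infty$. The only point requiring minimal care is the finiteness of $|\tilde\Omega|$, which is ensured by $\tilde\Omega\Subset\Omega\setminus S_\vp\subset\R^{2m}$.
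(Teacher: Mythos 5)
Your argument is correct and coincides with the paper's own proof: both derive a contradiction by noting that \eqref{ukinfty} forces $u_k\to-\infty$ uniformly on $\overline{\tilde\Omega}$, so $\int_{\tilde\Omega}e^{2mu_k}\,dx\to 0$, contradicting $\int_\Omega V_ke^{2mu_k}\,dx=\Lambda>0$. The added remarks on the finiteness of $|\tilde\Omega|$ are fine but not substantively different.
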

\begin{proof}
 We assume by contradiction that the statement of the proposition is not true. Then there exists a sequence of solutions $(u_k)$ to 
 \eqref{eq1} satisfying \eqref{ukinfty} and \eqref{eq-2} for some $\Lambda>0$. Therefore, by \eqref{ukinfty}, $u_k\to-\infty$ uniformly
 in $\tilde\Omega$ and hence
 \begin{align*}
  \Lambda=\int_{\Omega}V_ke^{2mu_k}dx=\int_{\tilde\Omega}e^{2mu_k}dx\xrightarrow{k\to\infty}0,
 \end{align*}
a contradiction.
\end{proof}





\section{Proof of Theorem \ref{thm-4}}

\subsection{The case $\Omega$ is an annulus.}
Let $\Omega= B_{R_2}\setminus B_{R_1}$ be an annulus. Let $X=C^0_{rad}(\bar \Omega)$.  We fix  $\Lambda\in (0,\infty)$.
  For $k\in\N$ and $v\in X$ we choose $c_v=c(v,k)\in\R$ so that
$$\int_{\Omega}V_k e^{2m(v+c_v)}dx=\Lambda.$$ 
   Let $\vp\in \mathcal{K}(\Omega,\emptyset)$ be radially symmetric. 
For $k\in\N$ we define an operator 
$T_k:X\to X$, $v\mapsto\bar{v}$ where 
$$\bar{v}:=\tilde{v}+k \vp(x),\quad \tilde{v}(x)=\int_{\Omega}G(x,y)V_k(y)e^{2m(v(y)+c_v)}dy,$$
and $G$ is the Green function of $(-\D)^m$ on $\Omega$  with the Navier boundary conditions.

\begin{lem}\label{cpt-omega}
 Let $k\in\N$ be fixed. Let $(v,t)\in X\times (0,1]$ satisfies $v=tT_k(v)$. Then there exists $M>0$ such that $\|v\|_X\leq M$ for all
 such $(v,t)$.
\end{lem}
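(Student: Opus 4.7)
The plan is to decompose $v$ into its polyharmonic part and its harmonic background $tk\vp$, use a maximum principle to get a uniform lower bound on $v$, and then close the upper bound via an energy estimate combined with the radial one-dimensional Sobolev embedding that is available on the annulus.

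\textbf{Decomposition and maximum principle.} Unwinding the definition of $T_k$, the identity $v = t T_k(v)$ amounts to $v = t\tilde v + tk\vp$, where $\tilde v$ solves the Navier boundary value problem $(-\D)^m \tilde v = V_k e^{2m(v+c_v)} =: f_k$ in $\Omega$ with $\tilde v = \D\tilde v = \cdots = \D^{m-1}\tilde v = 0$ on $\de\Omega$, and $c_v$ is chosen so that $\int_\Omega f_k\,dx = \Lambda$. Since $f_k\ge 0$, iterating the Dirichlet maximum principle on $w_j := (-\D)^{m-j}\tilde v$ (which satisfies $-\D w_j = w_{j-1}\ge 0$ with $w_j = 0$ on $\de\Omega$) yields $\tilde v\ge 0$. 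Therefore
\[
v \;=\; t\tilde v + tk\vp \;\ge\; tk\vp \;\ge\; k\vp \;\ge\; -k\|\vp\|_{L^\infty(\Omega)},
\]
a uniform lower bound on $v$ independent of the pair $(v,t)\in X\times(0,1]$.

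\textbf{Energy estimate and radial embedding.} Testing the equation for $\tilde v$ against $\tilde v$ itself and integrating by parts using the Navier boundary conditions yields the standard coercivity bound
\[
c_\Omega\,\|\tilde v\|_{H^m(\Omega)}^2 \;\le\; \int_\Omega (-\D)^m\tilde v\cdot\tilde v\,dx \;=\; \int_\Omega f_k\tilde v\,dx \;\le\; \Lambda\,\|\tilde v\|_{L^\infty(\Omega)}.
\]
Since $X$ consists of radial continuous functions, $\tilde v$ is itself radial; writing $\tilde v(x) = u(|x|)$ and using $u(R_1) = u(R_2) = 0$ from the Navier condition, the separation $R_1 > 0$ makes the weight $r^{2m-1}$ bounded above and below on $[R_1,R_2]$. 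The fundamental theorem of calculus and Cauchy--Schwarz then give
\[
\|u\|_{L^\infty([R_1,R_2])} \;\le\; (R_2-R_1)^{1/2}\|u'\|_{L^2([R_1,R_2])} \;\le\; C_\Omega\,\|\tilde v\|_{H^m(\Omega)}.
\]
Plugged back into the energy estimate, this yields $\|\tilde v\|_{H^m(\Omega)}\le C\Lambda$, and hence $\|\tilde v\|_{L^\infty(\Omega)}\le C'\Lambda$. Combining with the lower bound from Step~1, and noting $v = t\tilde v + tk\vp \le \tilde v$, we conclude $\|v\|_X \le \max\{k\|\vp\|_{L^\infty(\Omega)},\,C'\Lambda\} =: M$.

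\textbf{Main obstacle.} The delicate point is that $\Lambda$ here is arbitrary and in particular can exceed the Brezis--Merle/Adams critical threshold $\Lambda_1/2$ featured in Theorem \ref{thm-2}. For a general non-radial domain in dimension $2m$ an $L^\infty$ bound on $\tilde v$ from only the $L^1$ estimate $\|(-\D)^m\tilde v\|_{L^1}\le \Lambda$ is not available in that regime. The combination of radial symmetry and the strict separation $R_1>0$ is what saves the argument: it reduces the $2m$-dimensional estimate to a bounded one-dimensional interval, where $H^1\hookrightarrow L^\infty$ holds unconditionally, and this is exactly why the same method fails for the ball $\Omega = B_R$ without the extra hypothesis $\D\vp(0)>0$.
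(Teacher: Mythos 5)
Your proof is correct, but it takes a genuinely different route from the paper's. The paper covers the annulus by finitely many congruent conical sectors, each carrying $Q$-curvature mass at most $\Lambda_1/4$ — a partition that exists for arbitrary $\Lambda$ precisely because $v$ and $V_k$ are radial, so the mass distributes evenly among the sectors — and then applies the Brezis--Merle/Jensen exponential-integrability lemma to obtain a uniform $L^p$ bound ($p>1$) on $e^{2m(v+c_v)}$, finishing with $W^{2m,p}$ elliptic estimates for the Navier problem. You instead bypass the Brezis--Merle machinery entirely: after the same lower bound via positivity of the Green function, you test the equation against $\tilde v$ to get $\|\tilde v\|_{H^m}^2 \lesssim \Lambda \|\tilde v\|_{L^\infty}$, and close the loop with the subcritical one-dimensional radial embedding $H^1_{\mathrm{rad}}(B_{R_2}\setminus B_{R_1})\hookrightarrow L^\infty$, which is available because $R_1>0$ keeps the weight $r^{2m-1}$ bounded from below. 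Both arguments exploit radial symmetry and the inner radius $R_1>0$ in an essential way, and both work for arbitrary $\Lambda>0$; yours is more direct and makes the source of the uniform bound more transparent. One small point worth making explicit: the integration by parts in the energy estimate is legitimate because $V_k e^{2m(v+c_v)}\in L^\infty(\Omega)$ (as $v\in C^0(\bar\Omega)$ and $c_v$ is finite), so $\tilde v\in W^{2m,p}(\Omega)$ for every $p<\infty$; also you never need the paper's intermediate bound $c_v\le C(k)$ since your estimate uses only $\|V_k e^{2m(v+c_v)}\|_{L^1}=\Lambda$, which holds by construction of $c_v$.
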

\begin{proof}
 We have 
 $$v(x)=t\int_{\Omega}G(x,y)V_k(y)e^{2m(v(y)+c_v)}dy+tk\vp(x)\geq -C(k)\quad \text{in } \Omega. $$
 Hence from the definition of $c_v$ we get  
 \[
 \Lambda=\int_\Omega V_k e^{2m(v+c_v)}\, dx\geq e^{2m(-C(k)+ c_v)} \int_\Omega V_k\, dx>a  e^{2m(-C(k)+ c_v)}
 \]
hence  $c_v\leq C(k)$. 
 Define the cone $\C$ as the set
 \begin{equation}\label{C} 
\C:=\left\{x\in\Omega\colon |\bar x|\leq\rho x_1\right\}, \quad\text{ with } x=(x_1,\bar x)\in\R\times\R^{2m-1},
\end{equation} 
for some  $\rho>0$ to be fixed later.
For some finite $M=M(\rho)$ we can write $\Omega$ as a union of (not necessarily disjoint) cones $\{\C_i\}_{i=1}^M$ such that for each such cone $\C_i$ we have 
\begin{itemize}
\item[$(i)$]$\C_i$ is congruent to $\C$,
\item[$(ii)$]  $\int_{N(\C_i)}V_k(y)e^{2m(v(y)+c_v)}dy\leq \frac{\Lambda_1}{4},\quad N(\C_i):=\cup_{ \C_i\cap\C_j\neq \emptyset}\C_j$
\end{itemize}
and we fix $\rho$ such that $(ii)$ holds.
 Notice that there exists $\delta>0$ such that $dist(\C_i,N(\C_i)^c)\geq\delta$ for  $i=1,\dots,M$. Therefore, for $x\in \C_1$ 
 $$v(x)\leq t\int_{N(\C_1)}G(x,y)V_k(y)e^{2m(v(y)+c_v)}dy+tk\vp(x)+C(\delta),$$ and together with Jensen's inequality, 
 for some $p>1$ we get 
 \begin{align*}
   \int_{\Omega}e^{p2m(v+c_v)}dx \leq M\int_{\C_1}e^{p2m(v+c_v)}dx\leq C.
 \end{align*}
 Since $\varphi$ is radially symmetric and polyharmonic we have $\varphi\in C^{2m}(\bar\Omega)$, and therefore by elliptic estimates and Sobolev embeddings
 $$\|v-tk\varphi\|_X\leq C\|v-tk\varphi\|_{W^{2m,p}(\Omega)}\leq C\|(-\Delta)^m v\|_{L^p(\Omega)}\leq C,$$
concluding the proof.
 \end{proof}

A consequence of Lemma \ref{cpt-omega} is that for every $k\in\N$, the operator $T_k$ has a fixed point $v_k\in X$. 
We set $u_k=v_k+c_{v_k}$. Then 
\begin{align}\label{u-ell}
 u_k(x)=\int_{\Omega}G(x,y)V_k e^{2mu_k(y)}dy+k \vp(x)+c_{v_k},\quad \int_{\Omega}V_k e^{2mu_k(y)}dx=\Lambda.
\end{align}
We claim that $c_{v_k}\to \infty$.

Again writing $\Omega$ as a union of cones and using Jensen's inequality we obtain  
$$\int_{\Omega}e^{2mu_k}dx\leq Ce^{2mc_{v_k}}\int_{\Omega}e^{2mu_k(y)}dy\int_{\Omega}\frac{e^{2mk\vp(x)}}{|x-y|^p}dx,$$ for some $p<2m$. Hence, if $c_{v_k}\leq C$, then
$$\int_{\Omega}V_ke^{2mu_k}dx \leq Cb\int_{\Omega}e^{2mu_k(y)}dy\int_{\Omega}\frac{e^{2mk\vp(x)}}{|x-y|^p}dx\xrightarrow{k\to\infty}0,$$
a contradiction. Thus $c_{v_k}\to \infty$, and hence $u_k\to\infty$ on $S^*_\vp$. 

 
 It remains to show that $u_k\to-\infty$ in $C^0_{loc}(\Omega\setminus S_{\varphi})$.
 Arguing as in Lemma \ref{lemmause} we conclude the proof. \hfill$\square$
 
%
 
\subsection{The case $\Omega$ is a ball }
We consider $$X=C^2_{rad}(\bar B_R),\quad \|v\|_X:=\max_{\bar B_R}(|v(x)|+|v'(x)|+|v''(x)|).$$ 
Let $\Lambda>0$. We fix $k\in \N$.  For $v\in X$ define $c_v\in\R$ given by 
$$\int_{\Omega}V_ke^{2m(v+c_v)}dx=\Lambda.$$
We define $T_k:X\to X$ given by  $v\mapsto \bar v$ where 
$$\bar v(x)=\frac{1}{\gamma_{2m}}\int_{\Omega}\log\left(\frac{1}{|x-y|}\right)V_k(y)e^{2m(v(y)+c_v)}dy+\left(k+\frac{|\D v(0)|}{2\D \vp(0)}\right)\vp(x).$$
Arguing as in \cite{H-vol} one can show that the operator $T_k$ has a fixed point, say $v_k$. We set $u_k=v_k+c_{v_k}$. Then 
$$u_k(x)=\frac{1}{\gamma_{2m}}\int_{\Omega}\log\left(\frac{1}{|x-y|}\right)V_k(y)e^{2mu_k(y)}dy+\left(k+\frac{|\D v_k(0)|}{2\D \vp(0)}\right)\vp(x)+c_{v_k},$$
and $$\int_{\Omega}V_ke^{2mu_k}dx=\Lambda.$$
Again as in \cite{H-vol} one can show that  there exists $C>0$ such that $u_k\leq C$ on $B_\ve$ for some $\ve>0$. 
Using this, and   as in the annulus domain case, one can show that
$ c_{v_k}\to\infty $. 
Thus $u_k(x)\to\infty$ for every $x\in S^*_\vp$. Finally, similar to  the annulus domain case, it follows that $u_k\to -\infty$ locally uniformly in $\Omega\setminus S_\vp$.
\hfill$\square$ 

\section{Proof of Theorem \ref{thm-5}}

Let $m\geq 2$. We set  $$\vp_k(r,\theta):=r^k\cos(k\theta),\quad 0\leq r\leq 2,\,0\leq\theta\leq 2\pi.$$ 
We extend $\vp_k$ on $B_2 \subset \R^{2m}$ as a function of only two variables, that is, $\vp_k(x):=\vp_k(r,\theta)$ for $x\in B_2$,
where $(r,\theta)$ is the polar coordinate of $\Pi(x)$ and $\Pi:\R^{2m}\to\R^2$ is the projection map. Then $\vp_k$ is a harmonic function on
$B_2$. Let $\Phi_k$ be the solution to the equation 
$$
\left\{\begin{split} -\D\Phi_k=\vp_k &\quad\text{in }B_2,\\
 \Phi_k=0 &\quad \text{on }\partial B_2.
 \end{split}\right.
 $$
We fix $0<\Lambda<\Lambda_1$. Then by Proposition \ref{sol-prop} there exists a sequence of solutions $(w_k)$ to \eqref{w-ell} with 
$$f:=\frac{2\Lambda}{\Lambda_1}(-\D)^m u_0,\quad K_k:=\left\{ \begin{array}{ll}
                                                e^{2m(\Phi_k+\frac{2\Lambda}{\Lambda_1}u_0)}\quad&\text{on }B_2\\
                                                0\quad&\text{on }B_2^c,
                                                            \end{array}\right.
$$ where $u_0\in C^\infty(\R^{2m})$ with $u_0=-\log|x|$ on $B_1^c$. Then $$u_k:=w_k+c_{w_k}+\Phi_k+\frac{2\Lambda}{\Lambda_1}u_0$$ satisfies 
\eqref{Qcurv2} and $u_k$ is given by
$$u_k(x)=\frac{1}{\gamma_{2m}}\int_{B_2}\log\left(\frac{1+|y|}{|x-y|}\right)e^{2mu_k(y)}dy+\Phi_k(x)+c_k,$$
for some $c_k\in\R$.
Moreover, 
$$\Delta u_k=-\varphi_k +e_k,$$
where
$$|e_k(x)|\le C\int_{B_2}\frac{e^{2mu_k(y)}}{|x-y|^2}dy.$$ 
Integrating, using Fubini's theorem and \eqref{Qcurv2} we obtain $\|e_k\|_{L^1(B_2)}\le C$. Then \eqref{Deltauk} follows at once from the definition of $\varphi_k$. \hfill$\square$

\end{document}